\newtheorem{Theorem}{Theorem}[section]
\newtheorem{Lemma}[Theorem]{Lemma}
\newtheorem{Corollary}[Theorem]{Corollary}
\newtheorem{Proposition}[Theorem]{Proposition}
\newtheorem{Remark}[Theorem]{Remark}
\def \dim{{\mbox {dim}}\,}
\def\V{\mbox{Var}}
\def\Z{{\mathbb Z}}
\def\C{{\mathbb C}}
\def\N{{\mathbb N}}
\def\V{{\mathbb V}}
\begin{document}
\title[Negative eingenvalues of the conformal Laplacian] {Negative Eigenvalues of the conformal Laplacian}

\author[G. Henry]{Guillermo Henry}\thanks{G. Henry is partially supported by PICT-2020-01302  grant  from ANPCyT}
\address{Departamento de Matem\'atica, FCEyN, Universidad de Buenos
	Aires and IMAS, CONICET-UBA, Ciudad Universitaria, Pab. I., C1428EHA,
	Buenos Aires, Argentina and CONICET, Argentina.}
\email{ghenry@dm.uba.ar}

\author[J. Petean]{Jimmy Petean}\thanks{J. Petean is supported
	by Fondo Sectorial SEP-CONACYT, grant A1-S-45886.}
\address{CIMAT, A.P. 402, 36000, Guanajuato. Gto., M\'exico}
\email{jimmy@cimat.mx}

\subjclass{53C21}

\date{}

\begin{abstract} Let $M$ be a closed differentiable manifold of dimension at least $3$. 
Let $\Lambda_0 (M)$ be the minimun number of non-positive eigenvalues that the conformal Laplacian of a metric on $M$
can have. 
We prove that for any $k$ greater than or equal to $\Lambda_0 (M)$,  there exists a Riemannian metric on $M$
such that its conformal Laplacian  has exactly $k$ negative eigenvalues.  Also, we  discuss upper  bounds for $\Lambda_0 (M)$.  

\end{abstract}

\maketitle

\section{Introduction}
Let $M$ be  a closed differentiable manifold of dimension $n\geq 3$. We denote with $\mathcal{M}_M$ the space of Riemannian metrics on $M$.  For $g \in \mathcal{M}_M$, the {\it conformal Laplacian} of the Riemannian manifold $(M,g)$, $L_g$, is the linear operator defined by 
\[L_g(u):=\frac{4(n-1)}{n-2}\Delta_gu+s_gu,\]
where  $\Delta_g$  and $s_g$ are the Laplace-Beltrami operator and the scalar curvature, respectively. This operator is interesting from a geometric point of view because it tells us how the scalar curvature changes in a given conformal class. Namely, if $h$ is a pointwise conformal deformation of $g$, i.e., $h=u^{\frac{4}{n-2}}g$ for some $u\in C^{\infty}_{>0}(M)$, then 
\begin{equation}\label{scalar curvature eqn}s_h=L_g(u)u^{-\frac{n+2}{n-2}}. \end{equation}
In particular, it follows from Equation \eqref{scalar curvature eqn} that the  celebrated Yamabe problem, that is the problem of finding a Riemannian metric of constant scalar curvature $c$  in the conformal class of $g$, $[g]$,  is equivalent to find  a positive smooth solution $u$ of  
\[L_g(u)=cu^{\frac{n+2}{n-2}}.\]

The conformal Laplacian is an elliptic and self-adjoint differential operator. Since $M$ is compact, it is well known that the  spectrum of $L_g$  is a non-decreasing unbounded sequence of eigenvalues 
\begin{equation}\label{eigenvalues} \lambda_1(g)<\lambda_2(g)\leq \dots\leq\lambda_k(g) \longrightarrow+\infty 
\end{equation}
where each eigenvalues in \eqref{eigenvalues} appears repeated according to its multiplicity.   
 
The sign of the first eigenvalue has a geometric meaning.  It provides information on the sign that the scalar curvature might have within the conformal class to which the metric belongs. Precisely, in the conformal class $[g]$  there  exists a metric of positive  (negative, zero) scalar curvature if and only if  $\lambda_1(g)$ is positive (negative, zero). This means 
in particular that  the sign of the first eigenvalue of the conformal Laplacian  is a conformal invariant. Actually, it can be seen that sign of the $kth$-eigenvalue is invariant in the conformal class (see \cite{ElSayed}). Therefore,  in a given conformal class $[g]$, we can define the {\it counting map of negative eigenvalues} as  
 \[Neg([g])=\#\big\{ k:\ \lambda_k(g)<0\big\}. \]

Let $k$ be  a non-negative integer.  We denote with $M_k(M)$ the set
\[M_k(M):=\big\{g\in \mathcal{M}_M: Neg([g])=k\big\}.\]

$M_0(M)$ will be the non-empty set if  there exists a metric $g$ with $\lambda_1(g)\geq 0$, which implies that there is a non-negative scalar scalar curvature metric in $[g]$. This is the case of manifolds that admit a metric with positive scalar
curvature, like  the $n-$dimensional sphere $S^n$, or manifolds that admit scalar flat metrics 
but no metric of positive scalar curvature like the  $n-$dimensional torus  $T^n$ (see \cite{Schoen-YauTorus} and \cite{Gromov-Lawson}). However,  $M_0(M)$ might be the empty set for some manifolds. For instance,  a compact quotient of the hyperbolic 3-dimensional space (see \cite{Anderson})  or  a compact K\"{a}hler complex surface with Kodaira dimension $2$ (see \cite{Lebrun2})  do not admit non-negative scalar curvature metrics. 

The smallest integer such that the set $M_k(M)$ is non-empty, $\Lambda(M)$,  is an  invariant of the differential structure of  $M$ that we called the {\it $\Lambda-$invariant}.
$\Lambda(M)$ is the minimum number of negative eigenvalues that a conformal Laplacian can have. In other words,  
\[\Lambda(M):=\min_{g\in \mathcal{M}_M}Neg([g]).\]
If $\Lambda(M)\geq 1$, then $M$ does not admit a metric with non-negative scalar curvature. We can think that
the  $\Lambda-$ invariant measures how far is a manifold from admitting a scalar flat metric. Indeed,  it is easy to see  
 that  $\Lambda(M)=0$ if and only if $M$ admits a  scalar flat metric  (see Section \ref{+Manifolds} for the details).

\medskip

Similarly, we define $\Lambda_0 (M)$ as the minimum of the number of non-positive eigenvalues of conformal Laplacians on
$M$. Clearly $\Lambda (M) \leq \Lambda_0 (M)$. For manifolds which admit metrics of positive scalar curvature they are
both 0, but strict inequality is also possible, for instance $\Lambda(T^n) =0$ and $\Lambda_0 (T^n) =1$.

\medskip

Although there are obstructions to the existence of Riemannian metrics with positive or zero scalar curvature there are no such obstructions for negative scalar curvature metrics (see \cite{Eliasson}, \cite{Kazdan-Warner} and  \cite{Kazdan-WarnerInventione}). This means that in any closed differentiable manifold $M$ of dimension at least $3$ there exists  $g\in \mathcal{M}_M$ such that $\lambda_1(g)<0$. Moreover,  it is also known that for any positive integer $k$ there is a Riemannian metric $g_k$ such that at least the first $k$ eigenvalues of $L_{g_k}$ are negative  (see \cite{ElSayed} and \cite{Y. Canzani}).   

It is natural to consider the question of  whether, for a given manifold $M$ and  a given integer $k$, 
there exists a metric $g\in \mathcal{M}_M$ with exactly $k$ negative eigenvalues. 

The main result of this article is the following theorem:
 
\begin{Theorem}\label{MainThm} Let $M$ be  a closed  connected manifold of dimension at least 3. For any $k\geq \Lambda_0 (M)$,  $M_k(M)$ is not the empty set.

	\end{Theorem}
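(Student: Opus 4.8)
The plan is to realize the value $\Lambda_0(M)$ with a single metric and then to raise the number of negative eigenvalues by one at each stage, controlling the spectrum through the continuity of each $\lambda_k(g)$ in $g$ and the min--max formula
\[
\lambda_k(g)=\min_{\substack{V\subset C^{\infty}(M)\\ \dim V=k}}\ \max_{0\neq u\in V}\frac{\int_M\big(\tfrac{4(n-1)}{n-2}|\nabla_g u|^2+s_g\,u^2\big)\,dv_g}{\int_M u^2\,dv_g}.
\]
Throughout I would maintain the invariant that the metric produced at stage $j$ lies in $M_j(M)$ with a strict gap $\lambda_{j+1}>0$.

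\emph{Base case.} First I would pick $g_0\in\mathcal{M}_M$ realizing the minimal number of non-positive eigenvalues, so that $\lambda_{\Lambda_0}(g_0)\le 0<\lambda_{\Lambda_0+1}(g_0)$. If $\lambda_{\Lambda_0}(g_0)<0$ nothing is needed. Otherwise $0$ sits at the top of the non-positive block; since the eigenfunctions of the kernel cannot all vanish on an open set, I would lower the scalar curvature slightly on a small ball meeting their common support --- using that negative scalar curvature can be prescribed freely (\cite{Kazdan-Warner}, \cite{Kazdan-WarnerInventione}) --- so that the first variation of every eigenvalue in the kernel is negative and the whole kernel is pushed below $0$. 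Taking the perturbation small enough preserves $\lambda_{\Lambda_0+1}>0$ by continuity, giving $g_*\in M_{\Lambda_0}(M)$ with the gap.

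\emph{Inductive step.} Given $g\in M_j(M)$ with $\lambda_j(g)<0<\lambda_{j+1}(g)$ and $j\ge\Lambda_0$, I would build $g'\in M_{j+1}(M)$ keeping the gap. On a small ball $B=B_r(p)$ I would insert a one-parameter deformation $g_t$, equal to $g$ off $B$ and carrying an increasingly negative scalar curvature inside $B$. Testing the quadratic form against a function $\psi_t$ supported in $B$, almost $L^2$-orthogonal to the first $j$ eigenfunctions of $g$ and with $\int_M\psi_t L_{g_t}\psi_t\,dv_{g_t}\to-\infty$, the min--max formula forces $\lambda_{j+1}(g_t)\to-\infty$; as $Neg([g_0])=j$ and every $\lambda_k(g_t)$ is continuous in $t$, there is a first parameter at which $\lambda_{j+1}$ reaches $0$.

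\emph{Main obstacle.} The heart of the matter is to ensure that at this crossing exactly one eigenvalue changes sign, i.e. $\lambda_{j+2}(g_t)>0$ there, so that $Neg$ increases by $1$ and not more. I would secure this by a decoupling estimate: since $g_t\equiv g$ outside $B$ and $\lambda_{j+1}(g),\lambda_{j+2}(g)>0$, the negative part of the spectrum created by the deformation consists of modes concentrated in $B$, and by keeping $B$ small (or the bump essentially one-dimensional) one shows that these modes become negative one at a time, giving a parameter interval on which $\lambda_{j+1}<0<\lambda_{j+2}$. The alternative, more flexible route is a genericity argument: after an arbitrarily small perturbation the vanishing eigenvalue is simple, so a generic path crosses the wall $\{\lambda_{j+1}=0\}$ transversally and away from the higher-codimension set where two eigenvalues vanish together. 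Either way the crossing is simple; stopping just past it (or adding a final small scalar-curvature-lowering perturbation in the direction that decreases $\lambda_{j+1}$) yields $g'$ with $\lambda_{j+1}(g')<0<\lambda_{j+2}(g')$, hence $g'\in M_{j+1}(M)$ with the gap. Iterating the step from $g_*$ then produces a metric in $M_k(M)$ for every $k\ge\Lambda_0(M)$.
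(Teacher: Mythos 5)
Your base case contains a step that cannot be carried out. Suppose $g_0$ realizes $\Lambda_0(M)=n_0+m_0$ with $m_0=\dim\ker (L_{g_0})\geq 1$, and suppose some smooth path $g(t)$ with $g(0)=g_0$ had the property you want, namely that a kernel eigenvalue $\mu_i$ satisfies $\mu_i'(0)<0$. Then for small $t<0$ that eigenvalue would be positive while the first $n_0$ eigenvalues remain negative and $\lambda_{n_0+m_0+1}$ remains positive, so $g(t)$ would have fewer than $\Lambda_0(M)$ non-positive eigenvalues, contradicting the definition of $\Lambda_0(M)$. Hence the very minimality you are exploiting forces the first variation of \emph{every} kernel eigenvalue to vanish for \emph{every} perturbation $H$ (in the paper's notation, $Q_H\equiv 0$), and in particular no bump lowering the scalar curvature can push the kernel down to first order. (Note also that a pointwise conformal change makes $s_g$ more negative somewhere yet moves no eigenvalue across zero, so ``decrease $s_g$ on a ball'' does not by itself decrease the relevant quadratic form.) The real content at this point is that the case $m_0\geq 1$ simply never occurs: the paper's Proposition 4.1 derives a contradiction between the forced identity $Q_H\equiv 0$ and the theorem of Gover--Hassannezhad--Jakobson--Levitin that $Q_H\neq 0$ for some $H$ whenever $Y(M,[g_0])<0$. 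That external input is indispensable and is absent from your argument.

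The inductive step likewise rests on the unproved assertion that the first eigenvalue crossing can be arranged to be simple and transversal. The ``decoupling estimate'' is not justified (concentrating modes can cross with multiplicity, e.g.\ in symmetric configurations), and the genericity route needs a transversality statement for eigenvalue crossings of $L_g$ whose proof is essentially the same hard analytic input as above; Theorem 2.2 only gives density of metrics with trivial kernel, not control of how many eigenvalues change sign along your path. The paper avoids the crossing analysis entirely: Lemma 3.3 forms the disjoint union of $(M,h)$, $h\in\tilde M_k(M)$, with a sphere carrying a metric in $\tilde M_1(S^n)$, so that the union has exactly $k+1$ negative eigenvalues and trivial kernel by construction, and then transfers this to $M\# S^n\cong M$ via B\"ar--Dahl's surgery theorem (Theorem 2.1), which moves each of the first $k+2$ eigenvalues by less than $\varepsilon$. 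You should replace your deformation argument by this connected-sum step, and your base case by the paper's Propositions 3.1 and 4.1.
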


The article is organized as follows. In Section \ref{Preliminares} we introduce notation, definitions and we review some results from the literature needed to prove Theorem \ref{MainThm}. In Sections \ref{+Manifolds}  and \ref{proofofmainthm} we prove Theorem \ref{MainThm}. In Section \ref{Lambda} we discuss certain bounds for the $\Lambda_0$ and $\Lambda-$invariants.

\section{Preliminaries}\label{Preliminares}

\subsection{Notation and definitions}

Here, we introduce notation and some definitions that will be used throughout the paper.

Given $g\in\mathcal{M}_M$ and $k$ a non-negative integer we can decompose $M_k(M)$ into the disjoint sets 
\[\tilde{M}_k(M):=\{g\in M_k(M):\  \ker(L_g)=0\}\]
and
\[C_k(M):=\{g\in M_k(M):\ \ker(L_g)\neq 0\}.\]

In the following, in order to simplify the notation,  we will omit $M$ from $M_k(M)$,  $\tilde{M}_k(M)$ and $C_k(M)$  when there is no possibility of confusion.

To any closed manifold we associate the sequences of integers  \[s(M)=\{s_i\}_{i\in \N}\ \ \mbox{and}\ \ n(M)=\{n_i\}_{i\in \N}\] 
which satisfy the following properties:
\begin{itemize}
\item $\{s_i\}_{i\in \N}$ is an increasing sequence.

\item $M_{s_i}$ is non-empty for any $i\in \N$.

\item If $M_k$ is non-empty, then there exists  $i$ such that $s_i=k$.

\item $s_{i+1}=s_{i}+n_i$.

\end{itemize}

We will see in Corollary \ref{k0trivial} that there exists $k_0$ such that for any $i\geq k_0$ we have that $n_i =1$.

\subsection{Yamabe constant}

The Einstein metrics are the critical points on $\mathcal{M}_M$ of the  {\it Einstein-Hilbert functional},  defined as 
\[Y(g)=\frac{\int_M s_g dv_g}{Vol(M,g)^{\frac{n-2}{n}}}.\]
  The functional $Y$ is not bounded neither from above nor from below. Actually, it might not have critical points, and
its  critical points are never local extrema, they are saddle points (see \cite{Schoen}).  However, $Y$ is bounded from below in any conformal class and it make sense to  define the so called  {\it 
  	Yamabe constant}
  \[Y(M,[g])=\inf_{h\in [g]}Y(h).\]
  When we restrict the Einstein-Hilbert functional to a conformal class the critical points are constant scalar curvature metrics. 
By the resolution of the Yamabe Problem we known that the Yamabe constant is always achieved (see \cite{Aubin}, \cite{Lee-Parker}, and \cite{Schoen}). Therefore, in each conformal class there exists at least a metric (of a given volumen) with constant scalar curvature.  

It can be seen that within a conformal class it is not possible for two metrics, which do not change the sign of the scalar curvature, to have scalar curvatures with different signs.  In particular, for any metric in $[g]$ with constant scalar, the sign of its scalar  curvature coincides with the sign of $Y(M,[g])$.

The first eigenvalue of $L_g$ satisfies  \begin{equation}\label{firsteigenvalue}\lambda_1(g)= \inf_{u\in C^{\infty}(M)-\{0\}}\frac{\int_ML_g(u)udv_g}{\int_Mu^2dv_g}.\end{equation}
From \eqref{firsteigenvalue} it can be deduced that   $\lambda_1(g)$ and $Y(M,[g])$ have the same sign.  Hence,  if   $\lambda_1(g)=0$ (is positive, negative), then $\lambda_1(h)=0$ (positive, negative) for any $h\in [g]$ and there is a scalar flat metric in $[g]$ (a positive, a negative scalar cuvature metric).   

The {\it Yamabe invariant} of $M$, $Y(M)$, is an important differential invariant, introduced by Kobayashi in \cite{Kobayashi2}  and Schoen in \cite{Schoen}.  It measures the capability of the manifold of admitting a positive scalar curvature metric. $Y(M)$ is defined as  the supremum of all possible Yamabe constants.  We have that  $Y(M)$ is positive if and only if  $M$ carries a metric of positive scalar curvature.

\subsection{Conformal invariance of $L_g$}

The main property of $L_g$ is its conformal invariance. This means that  for a Riemannian metric $h=u^{\frac{4}{n-2}}g$ it holds
\begin{equation}\label{invariance}
L_{h}(v)=u^{-\frac{n+2}{n-2}}L_g(uv).
\end{equation}  
It follows from \eqref{invariance} that if $v$ belongs to kernel of $L_g$ then $L_h(u^{-1}v)=0$.  Therefore,  the dimension of the kernel of the conformal Laplacian is constant along the conformal class. 

Using the min-max characterization of the eigenvalues and the conformal invariance property it can be shown that the sign of $\lambda_i(g)$ does not depends on $g$ but only in $[g]$ (see \cite{ElSayed} for the details).  Hence,  the number of negative eigenvalues of the conformal Laplacian is a conformal invariant as well.

\subsection{Results from the literature}

Here, for the convenience of the reader, we state some important results from
the literature that we are going to use in the next sections to prove Theorem \ref{MainThm}. We refer to \cite{Bar-Dahl} and \cite{Gover_etal} for the proof of Theorem \ref{surgerythm} and \ref{zeroeigenvalue}, respectively.

\subsubsection{Surgery and eigenvalues.}

Let $M$ be a closed $n-$dimensional manifold. Let $S^{k}$ be a $k-$dimensional sphere embedded in $M$ with trivial normal bundle. By  removing a tubular neighborhood $U$ of $S^k$   from $M$  we obtain a manifold with boundary $S^k\times S^{n-k-1}$. The product manifold $D^{k+1}\times S^{n-k-1}$, where $D^{k+1}$ is the closure of the  $k+1-$dimensional ball, has the same boundary as well. Let $W$ be the closed manifold  obtained by plugging toghether $M-U$ and $D^{k+1}\times S^{n-k-1}$ by the common boundary. We say that $W$ is obtained from $M$ by performing surgery of codimension $n-k$, or alternative  by  a $k-$dimensional surgery. The surgery construction is a particular case of a connected sum along submanifolds  (see \cite{Kosinski} for details on the surgery and  the connected sum technique).

The following theorem is due to B\"ar and Dahl \cite{Bar-Dahl}. It  provides us important  information on the behavior of the conformal Laplacian eigenvalues under the surgery procedure.

\begin{Theorem}\label{surgerythm} Let $N$ be a manifold obtained from a closed manifold $M$ by surgery of codimension at least three and let $g\in \mathcal{M}_M$. Then, for any $k\in \N$ and $\varepsilon>0$ there exists $h\in \mathcal{M}_N$ such that 
	\[|\lambda_i(h)-\lambda_i(g)|<\varepsilon\ \  \ \ \ \mbox{for}\  1\leq i \leq k.\]

\end{Theorem}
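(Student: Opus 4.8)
The plan is to compare the spectra of $L_g$ on $M$ and of a suitably chosen metric $h$ on $N$ through the variational characterization of the eigenvalues. Writing $a_n=\frac{4(n-1)}{n-2}$ and integrating by parts in \eqref{firsteigenvalue}, the $i$-th eigenvalue has the min–max description
\[
\lambda_i(g)=\min_{\substack{V\subset C^{\infty}(M)\\ \dim V=i}}\ \max_{0\neq u\in V}\ \frac{\int_M\bigl(a_n|\nabla_g u|^2+s_g u^2\bigr)\,dv_g}{\int_M u^2\,dv_g},
\]
and similarly on $N$. I would construct on $N$ a one–parameter family of metrics $h=h_\delta$, $\delta>0$, coinciding with $g$ away from the surgery, and then prove that $\lambda_i(h_\delta)\to\lambda_i(g)$ as $\delta\to 0$ for each $i\le k$; since $\varepsilon$ is arbitrary this suffices. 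The point is that $M$ and $N$ share the common region $M\setminus U$, where $U$ is a tubular neighborhood of the surgery sphere $S^k$, and differ only in how this region is capped off.

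The metric $h_\delta$ is a Gromov–Lawson type neck. Near $S^k$ the metric $g$ is close to $g_{S^k}+dr^2+r^2\,g_{S^{n-k-1}}$, and I would replace the radial profile $r\mapsto r$ of the round $S^{n-k-1}$ factor by a function $f_\delta$ that first decreases to a small constant $\delta$, runs along a long cylinder $S^k\times S^{n-k-1}$ with the sphere factor of radius $\delta$, and then opens up to close off smoothly as $D^{k+1}\times S^{n-k-1}$. The decisive feature, and the place where the hypothesis enters, is that on the cylindrical part the scalar curvature is dominated by that of the small round factor,
\[
s_{h_\delta}\ \approx\ \frac{(n-k-1)(n-k-2)}{\delta^{2}}.
\]
Because the codimension satisfies $n-k\ge 3$, the linking sphere has dimension $n-k-1\ge 2$, so this quantity is positive and tends to $+\infty$ as $\delta\to 0$, while the construction keeps $s_{h_\delta}$ bounded below off the neck and keeps the volume of the gluing region arbitrarily small.

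For the inequality $\lambda_i(h_\delta)\le \lambda_i(g)+\varepsilon$ I would transplant the first $i$ eigenfunctions $\varphi_1,\dots,\varphi_i$ of $L_g$: multiply each by a cut–off $\chi_\delta$ that vanishes on the neck and equals $1$ outside $U$, and extend by zero across the handle. Since $S^k$ has codimension at least two, such cut–offs exist with arbitrarily small Dirichlet energy, and $U$ has small volume, so the Rayleigh quotients of the $\chi_\delta\varphi_j$ on $(N,h_\delta)$ converge to those of the $\varphi_j$ on $(M,g)$; inserting the span of the $\chi_\delta\varphi_j$ into the min–max formula gives the upper bound.

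The reverse inequality $\lambda_i(g)\le \lambda_i(h_\delta)+\varepsilon$ is the main obstacle, because now the roles are exchanged and I must transplant the low eigenfunctions $\psi_1,\dots,\psi_i$ of $L_{h_\delta}$ — whose eigenvalues are already bounded uniformly in $\delta$ by the step above — back to $M$. The crucial step is a concentration estimate: a low eigenfunction cannot carry appreciable mass on the neck, for if $\int\psi^2$ over the neck were not small then the term $\int s_{h_\delta}\psi^2$, bounded below elsewhere, would alone force the Rayleigh quotient to grow like $\delta^{-2}$, contradicting the uniform bound on $\lambda_i(h_\delta)$. Once each $\psi_j$ is known to be $L^2$–small on the neck, and hence (after elliptic estimates) $W^{1,2}$–small there, cutting it off and restricting to $M\setminus U\subset M$ produces an $i$–dimensional space whose Rayleigh quotients on $(M,g)$ differ from $\lambda_j(h_\delta)$ by $o(1)$; the min–max principle then yields $\lambda_i(g)\le\lambda_i(h_\delta)+o(1)$. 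Combining the two inequalities and choosing $\delta$ small completes the argument. The technical heart is thus the interplay between the divergent positive scalar curvature on the neck, which confines the low eigenfunctions away from the handle, and the smallness of the gluing region in volume and capacity — both of which the codimension $\ge 3$ assumption makes available at once.
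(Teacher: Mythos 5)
First, a point of order: the paper does not prove Theorem \ref{surgerythm} at all --- it is quoted from B\"ar--Dahl and the authors explicitly refer to \cite{Bar-Dahl} for the proof. Your sketch in fact reconstructs the architecture of that cited proof: a Gromov--Lawson type neck metric $h_\delta$ on $N$ agreeing with $g$ off a tubular neighborhood of the surgery sphere, the min--max characterization of the $\lambda_i$, a capacity/cut-off argument for the inequality $\lambda_i(h_\delta)\leq\lambda_i(g)+\varepsilon$, and the blow-up of the scalar curvature of the small linking sphere $S^{n-k-1}$ (here codimension $\geq 3$, i.e.\ $n-k-1\geq 2$, enters exactly as you say) to confine low eigenfunctions away from the handle for the reverse inequality. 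So the strategy is the correct one and is the one in the literature.

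That said, as written the argument asserts rather than proves the decisive estimates in three places. (i) You claim the construction ``keeps $s_{h_\delta}$ bounded below off the neck'' uniformly in $\delta$; this is precisely the delicate point of the Gromov--Lawson bending, since bending the profile curve produces curvature terms that can be large and negative unless the bending is spread over a long neck, and a quantitative, $\delta$-uniform version of this bound is needed before your concentration estimate can be run at all. (ii) For the upper bound the cut-offs must vanish on the whole region where $h_\delta\neq g$ (not merely near $S^k$), and the min--max requires controlling the maximum of the Rayleigh quotient over the \emph{span} of the $\chi_\delta\varphi_j$, hence showing the $L^2$ Gram matrix stays close to the identity; both are standard but not free. (iii) The passage from ``$\psi_j$ is $L^2$-small on the neck'' to ``$W^{1,2}$-small there'' does not follow from elliptic estimates alone, because the constants in those estimates degenerate with the geometry as $\delta\to 0$; one has to use the eigenvalue equation to obtain a global energy identity and then an averaging (coarea) argument to select a cut-off annulus with small Dirichlet energy. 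None of these is a wrong turn --- they are exactly the technical content of \cite{Bar-Dahl} --- but they constitute the proof, and the sketch does not supply them.
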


\subsubsection{The kernel of the conformal Laplacian}

For a generic metric the kernel of  the conformal Laplacian is zero. In a very nice paper, Gover et. al. proved in \cite{Gover_etal} the following result:

\begin{Theorem}\label{zeroeigenvalue} The subset of Riemannian metrics for which zero is not an eigenvalue of the conformal Laplacian is open and dense in $\mathcal{M}_M$.
	\end{Theorem}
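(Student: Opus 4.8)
The plan is to prove the two halves — openness and density — by quite different means. \emph{Openness} is the soft part. The eigenvalues $\lambda_i(g)$ depend continuously on $g$ in the $C^\infty$ topology, and below any fixed bound only finitely many of them occur. Hence if $0\notin\mathrm{spec}(L_{g_0})$ there is an integer $N$ with $\lambda_N(g_0)<0<\lambda_{N+1}(g_0)$, and by continuity these strict inequalities persist on a neighborhood of $g_0$; on that neighborhood $0$ is not an eigenvalue. So the complement of the bad set is open.

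For \emph{density} I would show that the bad set $Z=\{g:\ \ker(L_g)\neq 0\}$ has empty interior. Fix $g_0$ with $m:=\dim\ker(L_{g_0})\geq 1$ and an $L^2(dv_{g_0})$-orthonormal basis $\phi_1,\dots,\phi_m$ of $\ker(L_{g_0})$. Along a smooth family $g_t$ with $\dot g_0=h$, a Lyapunov--Schmidt reduction (equivalently, Kato's analytic perturbation theory) reduces the persistence of a zero eigenvalue to the vanishing of the symmetric $m\times m$ matrix $B(t)$ obtained by restricting $L_{g_t}$ to the perturbed near-kernel. A first-variation computation — using self-adjointness of $L_{g_0}$ and $L_{g_0}\phi_i=0$ to kill every term in which $\dot\phi_i$ or the varying volume form $\dot{(dv_g)}$ appears — yields
\[\dot B_{ij}(0)=\int_M(\dot L_h\,\phi_i)\,\phi_j\,dv_{g_0},\qquad \dot L_h=\frac{4(n-1)}{n-2}\,\dot\Delta_h+\dot s_h.\]
If $h=f g$ is a conformal direction, the invariance \eqref{invariance} forces this variation to vanish, consistent with the kernel dimension being a conformal invariant; hence genuinely non-conformal $h$ must be exploited.

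The heart of the matter is a \emph{transversality} statement: the linear map $h\mapsto\big(\dot B_{ij}(0)\big)_{i,j}$ from symmetric $2$-tensors to symmetric $m\times m$ matrices is surjective. Granting it, $Z$ is locally cut out near $g_0$ by $\det B=0$ with $B$ submersive, so $Z$ is a finite union of positive-codimension submanifolds and in particular has empty interior; together with the openness of its complement this gives density. To obtain surjectivity I would first arrange the $\dot\Delta_h$ contribution to be controlled and isolate the scalar-curvature part $\dot s_h=-\Delta_{g_0}(\mathrm{tr}_{g_0}h)+\delta_{g_0}\delta_{g_0}h-\langle\mathrm{Ric}_{g_0},h\rangle$, and then invoke that the products $\{\phi_i\phi_j\}$ are linearly independent in $C^\infty(M)$: this is where \emph{unique continuation} for the elliptic operator $L_{g_0}$ enters, since no nontrivial kernel element can vanish on an open set, so $\int_M w\,\phi_i\phi_j\,dv_{g_0}$ can be made an arbitrary symmetric matrix by choosing the curvature perturbation $w$.

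The step I expect to be the main obstacle is precisely this transversality/non-degeneracy claim: simultaneously controlling the $\dot\Delta_h$ term while prescribing $\dot s_h$, and converting the linear independence of the eigenfunction products into genuine surjectivity onto symmetric matrices at a \emph{possibly non-generic} base metric $g_0$, where the linearized scalar-curvature map may itself fail to be surjective. Overcoming this will likely require using the full freedom in the tensor $h$ (not merely its trace) together with a careful application of unique continuation.
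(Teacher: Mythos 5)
The paper itself gives no proof of Theorem \ref{zeroeigenvalue}: it is quoted from \cite{Gover_etal}, so your proposal must be measured against that source (whose key computation the present paper reuses in the proof of Proposition \ref{prop n=1}). Your openness argument is correct and standard. For density, your Lyapunov--Schmidt reduction to the symmetric $m\times m$ matrix $B(t)$ and the first-variation formula $\dot B_{ij}(0)=\int_M(\dot L_h\phi_i)\phi_j\,dv_{g_0}$ are exactly the right objects; this is the operator $Q_H$ of \cite{Gover_etal}, and the vanishing in conformal directions is also as in that paper.

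The gap is precisely where you flagged it, and it is genuine on two counts. First, the surjectivity of $h\mapsto\dot B(0)$ onto symmetric $m\times m$ matrices is not established by what you invoke: unique continuation gives that each nonzero element of $\ker(L_{g_0})$ has nowhere dense zero set, but it does \emph{not} give linear independence of the products $\{\phi_i\phi_j\}_{i\le j}$. Products of linearly independent solutions of an elliptic equation can satisfy nontrivial linear relations (the model relation $\cos^2+\sin^2-1^2=0$ among eigenfunctions shows nothing in unique continuation forbids an identity of the form $\sum_k d_k\psi_k^2=0$ with mixed signs), so the map $w\mapsto\bigl(\int_M w\,\phi_i\phi_j\,dv_{g_0}\bigr)_{ij}$ need not be onto, and you offer no mechanism by which the $\dot\Delta_h$ contribution repairs this. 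Second, and more to the point, full surjectivity is not needed. It suffices to produce a \emph{single} $H$ with $Q_H\neq 0$ (this is Proposition 4.1 of \cite{Gover_etal}): then some analytic eigenvalue branch has $\mu_i'(0)\neq 0$, hence $\mu_i(t)\neq 0$ for all small $t\neq 0$, so $\dim\ker(L_{g_0+tH})\le m-1$ for all small $t\neq 0$. Iterating this at most $m$ times, with each step arbitrarily small, yields a metric with trivial kernel arbitrarily close to $g_0$, which is density. I recommend replacing the transversality claim by this non-vanishing statement plus induction on the kernel dimension; as written, the argument rests on an unproved (and possibly false) linear-algebraic assertion, while the weaker assertion that actually suffices is the one proved in the cited source.
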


\section{Manifolds that admit metrics with non-negative scalar curvature.  }\label{+Manifolds}

In this section we are going to prove that Theorem \ref{MainThm} holds for manifolds that  admit a non-negative scalar curvature metric. More precisely, we are going to show the following proposition:

\begin{Proposition}\label{+M} If $M$ admits a metric with non-negative scalar curvature, then $M_k$ is non-empty for any $k\geq 0$.
	\end{Proposition}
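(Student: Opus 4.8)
The plan is to separate the case $k=0$, which is the only place the curvature hypothesis is used, from the cases $k\geq 1$, which I would obtain by a continuity (intermediate value) argument along a path of metrics. For $k=0$: if $M$ carries a metric $g_0$ with $s_{g_0}\geq 0$, then the Rayleigh quotient in \eqref{firsteigenvalue} is non-negative for every test function, since its numerator equals $\int_M (\tfrac{4(n-1)}{n-2}|\nabla u|^2+s_{g_0}u^2)\,dv_{g_0}\geq 0$. Hence $\lambda_1(g_0)\geq 0$, every eigenvalue of $L_{g_0}$ is non-negative, and $Neg([g_0])=0$; that is, $g_0\in M_0$.

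Now fix $k\geq 1$. By the results recalled in the introduction (\cite{ElSayed}, \cite{Y. Canzani}) there is a metric $g_1$ on $M$ for which at least the first $k$ eigenvalues of $L_{g_1}$ are negative, so $Neg([g_1])\geq k$ (such a $g_1$ can also be built by hand: digging $k$ disjoint small geodesic balls in which the scalar curvature is made very negative forces the quadratic form $u\mapsto \int_M (\tfrac{4(n-1)}{n-2}|\nabla u|^2+s_g u^2)\,dv_g$ to be negative on the $k$-dimensional span of bump functions supported in the balls, whence $\lambda_k<0$). I would then join $g_0$ to $g_1$ by a continuous path $t\mapsto g_t$, $t\in[0,1]$, in $\mathcal{M}_M$. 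The coefficients of $L_{g_t}$ vary continuously with $t$, so each $\lambda_i(g_t)$ is continuous, and the integer $N(t):=Neg([g_t])$ can change only at the values of $t$ where some eigenvalue passes through $0$. Since $N(0)=0$ and $N(1)\geq k$, it suffices to show that $N$ never jumps by more than one: then $N$ takes every value in $\{0,1,\dots,N(1)\}$, and any $t_\star$ with $N(t_\star)=k$ yields $g_{t_\star}\in M_k$.

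The main obstacle is exactly this control of the jumps, i.e.\ showing that the zero-crossings along the path can be taken simple and transverse. The set $\{\,\dim\ker L_g\geq 1\,\}$ is, by Theorem \ref{zeroeigenvalue}, closed with dense complement; the point I must upgrade is that the deeper stratum $\{\,\dim\ker L_g\geq 2\,\}$ has codimension at least two, so that a path in general position misses it and meets the corank-one stratum only transversally at finitely many parameters. This is the standard transversality picture for families of self-adjoint elliptic operators, where the corank-$r$ locus has codimension $\binom{r+1}{2}$; I would realize it by perturbing $g_t$ slightly with endpoints fixed, using the openness--density statement of Theorem \ref{zeroeigenvalue} to guarantee that after perturbation the path has only finitely many crossings, each involving a single eigenvalue moving through $0$ with nonzero speed. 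At such a crossing $N$ increases by exactly one, which is all the interpolation argument requires; establishing this genericity, rather than the soft continuity facts, is where the real work lies.
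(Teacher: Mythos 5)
Your $k=0$ step is fine, and your interpolation plan correctly identifies its own weak point; unfortunately that weak point is a genuine gap, not a routine verification. The claim that the path can be perturbed so that $\dim\ker L_{g_t}\leq 1$ for all $t$ (so that $N(t)$ jumps by at most one) does not follow from Theorem \ref{zeroeigenvalue}: openness and density of $\{\ker L_g=0\}$ says nothing about the ``codimension'' of the corank-$\geq 2$ stratum, and in infinite dimensions density alone does not let a one-parameter family avoid a closed nowhere-dense set. The von Neumann--Wigner count $\binom{r+1}{2}$ is a statement about \emph{generic} families of self-adjoint operators; to apply it to the specific family $g\mapsto L_g$ you would need the first-order perturbation map $H\mapsto Q_H$ (valued in symmetric endomorphisms of $\ker L_{g_0}$, a space of dimension $\binom{m_0+1}{2}$) to be surjective, or at least to have image transverse to the lower-rank strata. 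The only available input here --- Proposition 4.1 of \cite{Gover_etal}, quoted in the proof of Proposition \ref{prop n=1} --- is that $Q_H\neq 0$ for \emph{some} $H$, and only under the hypothesis that the Yamabe constant of the class is negative; at a crossing where an eigenvalue other than $\lambda_1$ sits at zero the Yamabe constant is negative, but nonvanishing of $Q_H$ is far weaker than the transversality you need, and nothing rules out a crossing at which several eigenvalues hit zero simultaneously and $N$ jumps by more than one. Note also that the conformal invariance of $\dim\ker L_g$ means the bad set is a union of entire conformal classes, so it is in no sense ``small'' in directions tangent to the conformal class; all the genericity must come from transverse directions, which is exactly what is unproven.

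The paper avoids this entirely by a different mechanism. It first produces one metric in $\tilde M_1$: either directly (if $M$ carries positive scalar curvature) or, in the purely scalar-flat case, by using the saddle-point property of the Einstein--Hilbert functional to find a deformation $g_0+tH$ with negative Yamabe constant, which pushes the (simple) first eigenvalue below zero while $\lambda_2$ stays positive. It then increments the count by exactly one at a time (Lemma \ref{nogap}): take the disjoint union with $S^n$ carrying a metric in $\tilde M_1(S^n)$, whose spectrum simply adds one negative eigenvalue and no kernel, and then perform a $0$-dimensional surgery (connected sum, diffeomorphic to $M$) and invoke the B\"ar--Dahl approximation theorem (Theorem \ref{surgerythm}) to realize almost the same spectrum on $M$ itself. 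Because the model metric on the disjoint union has trivial kernel and a definite spectral gap around zero, no transversality or kernel-stratification analysis is needed. If you want to salvage your approach you would have to prove the codimension statement for the conformal Laplacian family, which is a substantial result in its own right and not in the cited literature.
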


First, let us observe that these three facts are equivalent:
\begin{itemize}
\item[i)] $M$ admits a scalar flat metric.
\item[ii)]  $M$ admits a metric with non-negative scalar curvature.
\item[iii)]	$\Lambda(M)=0$.
\end{itemize}
It is clear that $i)$ implies $ii)$ and that $ii)$ imples $iii)$. Let us assume that $\Lambda(M)=0$.  Then  there exists $g$ with $\lambda_1(g)\geq 0$. Choose any metric $h$ with negative first  eigenvalue.  We denote with  $c_{g,h}:[0,1]\longrightarrow \mathcal{M}_M$  and with  $\lambda_i(t)$ the curve in $\mathcal{M}_M$ 
\begin{equation}\label{curva}c_{g,h}(t)=(1-t)g+th,
\end{equation}
and the $ith-$eigenvalue of $L_{c_{g,h}(t)}$, respectively.  
In \cite{Bar-Dahl}, it was proved that for any $i\geq 1$, the map
\[g\in \mathcal{M}_M\longrightarrow \lambda_i(g)\]
is continuous in the $C^1-$topology.  Given that  $\lambda_1(0)\geq 0$ and $\lambda_1(1)<0$,  there must exists $t_0\in [0,1)$ such that $\lambda_1(t_0)=0$.  Therefore, we can conclude that  in the conformal class of $c_{g,h}(t_0)$ there is a metric with vanishing scalar curvature.

\begin{Remark} The $\Lambda-$invariant, unlike the Yamabe invariant,   does not distinguish between manifolds that  admit scalar flat metrics and  those that admit metrics with positive scalar curvature. On the other hand, the $\Lambda-$invariant can diferentiate manifolds that admits scalar flat metrics from those that only  admit metrics with negative scalar curvature. To give an example,  a compact quotient of a non-abelian nilpotent Lie group $G$, $G/\Gamma$, satisfies that $Y(G/\Gamma)=0$. However, it is well known that $G/\Gamma $ does not admit a  metric with scalar curvature equal to zero. Hence, $\Lambda(G/\Gamma)\geq 1$.
\end{Remark}

In order to prove Proposition \ref{+M} we shall prove the following two lemmas.

\begin{Lemma} $\tilde{M}_1(S^n)$ is non-empty.
	
	\end{Lemma}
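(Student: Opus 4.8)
The plan is to obtain the desired metric as an interior point of a straight-line path of metrics running from the round metric (where the conformal Laplacian is positive) to a metric of negative scalar curvature, and to control the first two eigenvalues along the way. Let $g_0$ be the round metric on $S^n$, whose scalar curvature is a positive constant; inserting the constant test function into the variational formula \eqref{firsteigenvalue} gives $\lambda_1(g_0)>0$ (in fact every eigenvalue of $L_{g_0}$ is positive, so $Neg([g_0])=0$). Since every closed manifold of dimension $\geq 3$ admits a metric of constant negative scalar curvature, choose such a $g_1$ on $S^n$; by \eqref{firsteigenvalue} with the constant function, $\lambda_1(g_1)<0$. I would then consider the path $c_{g_0,g_1}(t)=(1-t)g_0+tg_1$ from \eqref{curva}, which stays in $\mathcal{M}_{S^n}$ because a convex combination of positive-definite tensors is positive definite, and write $\lambda_i(t)$ for the $i$-th eigenvalue of $L_{c_{g_0,g_1}(t)}$. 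By the $C^1$-continuity of the maps $g\mapsto\lambda_i(g)$ (Bär--Dahl, already invoked above), each $\lambda_i(t)$ is continuous in $t$.

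Next I would isolate the last zero of the first eigenvalue. Since $\lambda_1(0)>0>\lambda_1(1)$ and $\lambda_1$ is continuous, the set $\{t:\lambda_1(t)=0\}$ is nonempty and closed; let $t_1$ be its maximum, so $t_1<1$ and $\lambda_1(t_1)=0$. As $\lambda_1(1)<0$, connectedness of $(t_1,1]$ and continuity force $\lambda_1(t)<0$ for all $t\in(t_1,1]$. The crucial ingredient is the strict gap $\lambda_1<\lambda_2$ recorded in \eqref{eigenvalues}, which holds for every metric on the connected manifold $S^n$ because the ground state of the conformal Laplacian is simple. Applied at $t_1$ it yields $\lambda_2(t_1)>\lambda_1(t_1)=0$, so by continuity of $\lambda_2$ there is $\delta>0$ with $\lambda_2(t)>0$ on $(t_1,t_1+\delta)$. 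Choosing any $t_*\in\bigl(t_1,\min(1,t_1+\delta)\bigr)$ gives
\[\lambda_1(t_*)<0<\lambda_2(t_*)\leq\lambda_3(t_*)\leq\cdots,\]
so $L_{c_{g_0,g_1}(t_*)}$ has exactly one negative eigenvalue and does not have $0$ in its spectrum. Hence $c_{g_0,g_1}(t_*)\in\tilde{M}_1(S^n)$, which is what I would claim.

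The only genuine obstacle is to rule out that \emph{several} eigenvalues drop below $0$ at the same instant, which would overshoot the count $1$; this is exactly what the simplicity of the first eigenvalue (the strict inequality $\lambda_1<\lambda_2$) resolves, since it keeps $\lambda_2$ strictly positive at the moment $\lambda_1$ vanishes. I would also emphasize that the triviality of the kernel is automatic from the strict sign pattern $\lambda_1<0<\lambda_2$, so Theorem \ref{zeroeigenvalue} is not needed for this lemma; it only enters later when one must arrange trivial kernels without such a built-in spectral gap.
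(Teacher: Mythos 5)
Your argument is correct and is essentially the paper's own proof: both take the segment $(1-t)g+th$ between a metric with $\lambda_1>0$ and one with $\lambda_1<0$, locate a zero of $\lambda_1$ immediately beyond which $\lambda_1$ is negative, and use the strict gap $\lambda_1<\lambda_2$ together with $C^1$-continuity of the eigenvalues to keep $\lambda_2$ positive there. Your choice of the \emph{last} zero of $\lambda_1$ is a slightly cleaner way to guarantee $\lambda_1<0$ just past the crossing than the paper's choice of a particular first crossing, but the substance is identical.
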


\begin{proof} It suffices to prove the existence of a metric $\tilde{g}$ such that  $\lambda_1(\tilde{g})<0$ and $\lambda_2(\tilde{g})>0$. Let us  consider $g$ and $h$  such that $\lambda_1(g)>0$ and  $\lambda_1(h)<0$. Let $c_{g,h}$ be the curve in $\mathcal{M}_{S^n}$ defined  as in \eqref{curva}, and let $t_0$ be the minimum of the set that consists of the values $t$ in  $(0,1)$  with the property that $\lambda_1(t)=0$  and $\lambda_1(s)<0$  in $(t,t+\delta_t)$ for some $\delta_t>0$. Recall that $\lambda_2(t)>\lambda_1(t)$ for any $t\in [0,1]$. Then, by the continuity of the eigenvalues in the $C^1-$topology, we have that for $\delta>0$
small enough,  $\lambda_1(t_0+\delta)<0$ and $\lambda_2(t_0+\delta)>0$. 
\end{proof}

\begin{Lemma}\label{nogap}  If $\tilde{M}_k(M)$ is not the empty set, then $\tilde{M}_{k+1}(M)$ is non-empty.
	
\end{Lemma}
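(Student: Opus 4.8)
The plan is to add exactly one negative eigenvalue to a given trivial-kernel metric by connect-summing with a sphere that carries a single negative eigenvalue, and then to transport the resulting spectral data back to $M$ via the B\"ar--Dahl surgery theorem. Concretely, since the preceding lemma gives a metric $\tilde{g}\in\tilde{M}_1(S^n)$, I would fix $g\in\tilde{M}_k(M)$ and consider the disjoint union $X=M\sqcup S^n$ equipped with the metric $G=g\sqcup\tilde{g}$. The connected sum $M\# S^n$ is obtained from $X$ by a $0$-dimensional surgery on an $S^0$ meeting each component once; this is a surgery of codimension $n\geq 3$, and $M\# S^n$ is diffeomorphic to $M$. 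Thus Theorem \ref{surgerythm} is available to compare the spectrum of a suitable metric on $M$ with that of $(X,G)$.

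First I would record the spectral bookkeeping on the disjoint union. Since the conformal Laplacian of $X$ is the direct sum $L_g\oplus L_{\tilde{g}}$, its spectrum is the union, counted with multiplicity, of the spectra of $L_g$ and $L_{\tilde{g}}$. Because $g\in\tilde{M}_k$ contributes exactly $k$ negative eigenvalues and no zero eigenvalue, and $\tilde{g}\in\tilde{M}_1(S^n)$ contributes exactly one negative eigenvalue and no zero eigenvalue, the ordered spectrum $\mu_1\leq\mu_2\leq\cdots$ of $L_G$ satisfies $\mu_{k+1}<0<\mu_{k+2}$. In particular there is a definite spectral gap across $0$.

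Then I would apply Theorem \ref{surgerythm} to the surgery taking $X=M\sqcup S^n$ to $M\# S^n\cong M$, keeping control of the first $k+2$ eigenvalues and choosing $\varepsilon<\min\{-\mu_{k+1},\mu_{k+2}\}$. This produces a metric $h$ on $M$ with $|\lambda_i(h)-\mu_i|<\varepsilon$ for $1\leq i\leq k+2$, whence $\lambda_{k+1}(h)<0<\lambda_{k+2}(h)$. Consequently $L_h$ has exactly $k+1$ negative eigenvalues, and since $0$ lies strictly between two consecutive eigenvalues it is not in the spectrum, i.e.\ $\ker(L_h)=0$. Therefore $h\in\tilde{M}_{k+1}(M)$, as required.

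The point where I would be most careful is the interplay with the kernel: it is precisely the hypothesis $g\in\tilde{M}_k$ (trivial kernel) that produces the gap $\mu_{k+1}<0<\mu_{k+2}$, and this gap is what lets the $\varepsilon$-approximation of finitely many eigenvalues pin down both the exact number of negative eigenvalues and the absence of a zero eigenvalue after surgery. The only other thing to verify is that Theorem \ref{surgerythm} applies with the disconnected source $X=M\sqcup S^n$, so that the $\lambda_i$ there denote the combined (union) spectrum; this is the natural reading of the surgery construction, the relevant surgery being the codimension-$n$ connected sum.
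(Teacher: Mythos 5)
Your argument is correct and is essentially identical to the paper's own proof: both take the disjoint union of $(M,g)$ with a sphere carrying one negative eigenvalue and trivial kernel, observe that the combined spectrum has exactly $k+1$ negative eigenvalues with a gap across $0$, and then transfer this to $M\# S^n\cong M$ via the B\"ar--Dahl surgery theorem with $\varepsilon$ small. Your explicit choice $\varepsilon<\min\{-\mu_{k+1},\mu_{k+2}\}$ and the remark about applying the surgery theorem to a disconnected source just make precise what the paper leaves implicit.
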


\begin{proof} Let $h\in \tilde{M}_k(M)$ and let $g\in \tilde{M}_1(S^n)$ ($n=\dim(M)\geq 3$).   Let us consider the disjoint union $M \amalg S^n $ endowed with the Riemannian metric $h\amalg g$. The spectrum of $L_{h\amalg g}$ is the disjoint union of the spectra of $L_h$ and $L_{g} $.   Since $\lambda_k(h)<0$ and  $\lambda_{k+1}(h)>0$, the spectrum of $L_{h\amalg g}$ has exactly $k+1$ negative eigenvalues and $\ker(L_{h\amalg g})	=\{0\}$. Let $(p,q)\in M \amalg S^n$. We perfom a $0-$dimensional surgery at $(p,q)$, that is, we obtain the connected sum $M \#S^n$, which is diffeomorphic to $M$. By Theorem \ref{surgerythm}, given $\varepsilon>0$  there exists $g_{\varepsilon}\in \mathcal{M}_{M\#S^n} $ such that
	\[|\lambda_i(g_{\varepsilon})-\lambda_i(h\amalg g)|<\varepsilon\]
	for any $1\leq i\leq k+2$. Therefore, for $\varepsilon$ small enough, $g_{\varepsilon}\in \tilde{M}_{k+1}(M \# S^n)$.   Let  $\varphi:M \longrightarrow M \# S^n$ be a diffeomorphism, then  $\varphi^{*}(g_{\varepsilon})\in \mathcal{M}_M$ is isometric to $g_{\varepsilon}$. Hence,  $\varphi^{*}(g_{\varepsilon})\in \tilde{M}_{k+1}(M)$.
	
\end{proof}

 Take a  Riemannian metric $g$ on $M$ such that for some $k\geq 1$, $\lambda_k(g)<0$. By Theorem \ref{zeroeigenvalue}, there exists, as close as we want  to $g$,  a metric $g_*$ with at least $k$ negative eigenvalues which conformal Laplacian has trivial kernel, that is $g_*\in \tilde{M}_{r}(M)$ for some $r\geq k$.   By applying the Lemma \ref{nogap}  successively, we obtain that $\tilde{M}_{r+l}(M)\neq \emptyset$  for any $l\geq 0$. We have proved the following corollary. 

\begin{Corollary}\label{k0trivial} For any closed manifold $M$, there exists $k_0$ such that $M_k (M) \neq \emptyset$ for any
$k\geq k_0$. 
	\end{Corollary}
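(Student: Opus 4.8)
The plan is to reduce the statement to the two preceding lemmas together with the genericity Theorem \ref{zeroeigenvalue}. The key observation is that once we know $\tilde{M}_r(M)$ is non-empty for a \emph{single} integer $r$, Lemma \ref{nogap} produces, by induction, a metric in $\tilde{M}_{r+l}(M)$ for every $l\geq 0$; since $\tilde{M}_j(M)\subseteq M_j(M)$, this already yields $M_k(M)\neq\emptyset$ for all $k\geq r$, and we may take $k_0=r$.

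So the whole task is to exhibit one metric with trivial conformal-Laplacian kernel and a definite number of negative eigenvalues. First I would start from a metric $g$ on $M$ with $\lambda_1(g)<0$; such a metric always exists because every closed manifold of dimension at least $3$ carries a metric of negative scalar curvature, as recalled in the introduction. In fact the results cited there give, for any prescribed $k$, a metric whose first $k$ eigenvalues are negative, but a single negative eigenvalue is all that is required here.

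Next I would make the kernel trivial without losing negativity. By Theorem \ref{zeroeigenvalue} the metrics whose conformal Laplacian has no zero eigenvalue are dense, so there is a metric $g_*$ arbitrarily close to $g$ with $\ker(L_{g_*})=0$. Because the eigenvalues $\lambda_i$ depend continuously on the metric in the $C^1$ topology, choosing $g_*$ close enough to $g$ keeps every strictly negative eigenvalue of $g$ negative; hence $g_*$ has some number $r\geq 1$ of negative eigenvalues and trivial kernel, i.e.\ $g_*\in\tilde{M}_r(M)$.

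Finally I would iterate Lemma \ref{nogap}: starting from $g_*\in\tilde{M}_r(M)$ one obtains $\tilde{M}_{r+1}(M)\neq\emptyset$, then $\tilde{M}_{r+2}(M)\neq\emptyset$, and so on, so that $\tilde{M}_{r+l}(M)\neq\emptyset$ for all $l\geq 0$. Setting $k_0=r$ completes the argument. The only step that requires any care is the perturbation: one must be sure that imposing a trivial kernel does not destroy the negative eigenvalues one has built, and this is exactly what the $C^1$-continuity of the spectrum guarantees. There is thus no genuine obstacle here — the corollary is essentially the bookkeeping consequence of Lemma \ref{nogap} and Theorem \ref{zeroeigenvalue}.
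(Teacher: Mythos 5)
Your argument is correct and is essentially identical to the paper's own proof: the paper also starts from a metric with some negative eigenvalues, perturbs it via Theorem \ref{zeroeigenvalue} (using $C^1$-continuity of the spectrum) to land in $\tilde{M}_r(M)$ for some $r$, and then iterates Lemma \ref{nogap}. Nothing further to add.
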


\begin{proof}[Proof of Proposition \ref{+M}] By asumption $M_0\neq \emptyset$.  If $M$ admits a metric of positive scalar curvature,  then $\tilde{M}_0$ is non-empty and  by Lemma \ref{nogap} we have that $\tilde{M}_k\neq \emptyset\ \mbox{for any}\ k\geq 0.$
	
	Let assume that $M_0=C_0$. Let $g_0$ be  a scalar flat metric on $M$. The  functional $Y$   restricted to the conformal class $[g_0]$ attains a minimum at $g_0$.  Since $Y$ does not have local extrema, there exists a traceless symmetric $(0,2)-$type tensor $H$  with $\delta(H)=0$ such that the curve $c(t)=g_0+tH$ satisfies 
	\[Y(c(t))<0\ \mbox{for}\ |t|\in (0, \varepsilon),\]
	(see for instance \cite{Schoen}). 
	
	The sequence of metrics $c(t_n)$ converges to $g_0$ when $t_n$ tends to $0$.   Then, 
	\[\lambda_2(c(t_n))\longrightarrow \lambda_2(g_0)>\lambda_1(g_0)=0.\]
Consequently, for sufficiently large $n$, we have that $\lambda_1(g(t_n))<0$ and  $\lambda_2(g(t_n))>0$, namely  $g(t_n)\in \tilde{M}_1$. Finally,  the proposition follows by repeatedly applying Lemma \ref{nogap}.
	
\end{proof}

\section{Proof of Theorem \ref{MainThm}}\label{proofofmainthm}

To complete the proof of Theorem \ref{MainThm} it remains to address the case when $\Lambda(M)\geq 1$.

\begin{Proposition}\label{prop n=1} Assume that $\Lambda(M)\geq 1$. If $g_0$ is a metric on $M$ such that
$L_{g_0}$ has exacly $\Lambda_0 (M)$ non-positive eigenvalues then 0 is not an eigenvalue of $L_{g_0}$.
	
	\end{Proposition}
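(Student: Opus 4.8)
The plan is to argue by contradiction: assuming $0$ is an eigenvalue of $L_{g_0}$, I will perturb $g_0$ to a nearby metric whose number of non-positive eigenvalues is strictly smaller than $\Lambda_0(M)$, contradicting the very definition of $\Lambda_0(M)$. Write $m=\#\{i:\lambda_i(g_0)<0\}$ and $d=\dim\ker L_{g_0}\ge 1$, so that $m+d=\Lambda_0(M)$ and $\lambda_{m+1}(g_0)=\dots=\lambda_{m+d}(g_0)=0<\lambda_{m+d+1}(g_0)$. The goal is to move the top non-positive eigenvalue $\lambda_{m+d}$ into the positive range while keeping the $m$ strictly negative eigenvalues negative; the perturbed metric then has at most $m+d-1<\Lambda_0(M)$ non-positive eigenvalues.

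For the perturbation I fix an $L^2(g_0)$-orthonormal basis $\phi_1,\dots,\phi_d$ of $\ker L_{g_0}$ and, for a symmetric $2$-tensor $H$, set $g_t=g_0+tH$. Then $L_{g_t}$ is a real-analytic family of self-adjoint elliptic operators, and by the Rellich--Kato theory of analytic perturbations the $d$ eigenvalue branches issuing from $0$ are analytic in $t$, with derivatives at $t=0$ equal to the eigenvalues of the symmetric matrix $A(H)=\big(\langle \dot L_H\phi_i,\phi_j\rangle_{L^2(g_0)}\big)_{i,j}$, where $\dot L_H=\tfrac{d}{dt}\big|_{0}L_{g_t}$ (the moving-volume correction is killed by the vanishing eigenvalue). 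By the continuity of the $\lambda_i$ in the $C^1$-topology, the $m$ eigenvalues below $0$ and those above $\lambda_{m+d+1}(g_0)>0$ keep their sign for small $t$. Hence it suffices to find $H$ for which $A(H)$ has a \emph{positive} eigenvalue: then $\lambda_{m+d}(g_t)>0$ for small $t>0$, giving the contradiction. Since $A(-H)=-A(H)$, it is in fact enough to produce a single $\phi\in\ker L_{g_0}$ and a single $H$ with $\langle\dot L_H\phi,\phi\rangle\neq 0$.

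Because $L_{g_0}\phi=0$, differentiating the Dirichlet-type energy with $\phi$ held fixed eliminates the volume term and yields
\[
\left.\frac{d}{dt}\right|_{0}\int_M\Big(\tfrac{4(n-1)}{n-2}\,|\nabla_{g_t}\phi|^2_{g_t}+s_{g_t}\phi^2\Big)\,dv_{g_t}=\langle\dot L_H\phi,\phi\rangle=\int_M\langle H,\mathcal T(\phi)\rangle\,dv_{g_0},
\]
where, after inserting the standard variations of $|\nabla\phi|^2$, $dv_g$ and $s_g$ and integrating by parts (so that no derivatives fall on $H$), $\mathcal T(\phi)$ is an explicit symmetric $2$-tensor whose only term that is neither proportional to $g$ nor built from $\nabla^2(\phi^2)$ or $\mathrm{Ric}$ is $-\tfrac{4(n-1)}{n-2}\,d\phi\otimes d\phi$. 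As $H$ ranges over all bump-supported tensors, the pairing vanishes for every $H$ if and only if $\mathcal T(\phi)\equiv 0$ pointwise, so the whole argument reduces to showing $\mathcal T(\phi)\not\equiv 0$ for some $\phi\in\ker L_{g_0}$.

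This is exactly where the hypothesis $\Lambda(M)\ge 1$ is used: it forbids scalar-flat metrics, so no nonzero constant can lie in $\ker L_{g_0}$ (a constant in the kernel would force $s_{g_0}\equiv0$). Thus any $\phi\in\ker L_{g_0}\setminus\{0\}$ is non-constant, and $\nabla\phi\neq0$ on a nonempty open set, where the rank-one term $-\tfrac{4(n-1)}{n-2}\,d\phi\otimes d\phi$ is nonzero. I expect the genuinely delicate point to be verifying that this rank-one term cannot be cancelled pointwise by the remaining, more regular terms of $\mathcal T(\phi)$ everywhere; the cleanest route is to evaluate $\mathcal T(\phi)$ along and orthogonal to $\nabla\phi$ at a point realizing a positive maximum of $|\nabla\phi|$, where the Hessian contribution is sign-controlled, forcing $\mathcal T(\phi)(p)\neq0$. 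Granting this, the perturbation $g_t=g_0\pm tH$ produces the metric with fewer non-positive eigenvalues, completing the proof; everything outside this last pointwise estimate is routine perturbation bookkeeping.
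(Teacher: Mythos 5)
Your overall strategy coincides with the paper's: perturb $g_0$ along $g_t=g_0+tH$, use Rellich--Kato analyticity of the $d$ eigenvalue branches emanating from $0$, identify their derivatives at $t=0$ with the eigenvalues of the quadratic form $H\mapsto\big(\langle \dot L_H\phi_i,\phi_j\rangle\big)$ on $\ker L_{g_0}$ (this is the operator $Q_H$ of Gover--Hassannezhad--Jakobson--Levitin), and conclude from the minimality of $\Lambda_0(M)$ that this form must vanish for every $H$, which must then be contradicted. Up to and including the reduction ``it suffices to find $\phi\in\ker L_{g_0}$ and $H$ with $\langle\dot L_H\phi,\phi\rangle\neq 0$,'' your argument is correct and is essentially the paper's.

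The gap is in the last step, which is exactly the point the paper does not prove itself but imports as Proposition 4.1 of \cite{Gover_etal} (there it is stated precisely as: if $Y(M,[g_0])<0$ then $Q_H\neq 0$ for some $H$). You explicitly defer this (``I expect\dots'', ``Granting this\dots''), so the proof is incomplete as written; worse, the route you sketch for closing it is not the right one. First, you extract from $\Lambda(M)\geq 1$ only that no nonzero \emph{constant} lies in $\ker L_{g_0}$, hence that $\phi$ is non-constant. What is actually needed, and what $\Lambda(M)\geq 1$ really gives, is that $Y(M,[g_0])<0$, so that no \emph{nowhere-vanishing} function can lie in $\ker L_{g_0}$ (a positive $\phi$ in the kernel would make $\phi^{4/(n-2)}g_0$ scalar-flat); consequently $\phi$ changes sign and its nodal set $\{\phi=0\}$ is a nonempty hypersurface containing points where $d\phi\neq 0$. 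Second, the place to evaluate $\mathcal T(\phi)$ is such a nodal point, not a maximum of $|\nabla\phi|$: at a nodal point every term of $\mathcal T(\phi)$ carrying a factor of $\phi$ or $\phi^2$ (the $\phi\,\nabla^2\phi$, $\phi^2\mathrm{Ric}$, $s\,\phi^2 g$ contributions) drops out, leaving $a\,d\phi\otimes d\phi+b\,|d\phi|^2 g$ with $a\neq 0$, which cannot vanish when $d\phi\neq 0$ because a nonzero rank-one tensor cannot equal a multiple of $g$ for $n\geq 3$. At an interior maximum of $|\nabla\phi|$, by contrast, all the $\phi$-weighted terms are still present and nothing prevents a pointwise cancellation of the rank-one term; the sign control on the Hessian you invoke does not rule this out. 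So the missing ingredient is genuinely the nodal-set argument (or the citation of \cite{Gover_etal}), and the weaker fact that $\phi$ is non-constant does not suffice.
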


\begin{proof}  Assume that $\dim(\ker(L_{g_0}))=m_0 \geq 1$. Let $n_0$ be the number of negative eigenvalues of
$L_{g_0}$, so that $\Lambda_0 (M)= n_0 + m_0$.
 Let $g(t)$ be an analytic curve in $\mathcal{M}_M$ such that $g(0)=g_0$. 
Let $\lambda_i (t) = \lambda_i (g(t))$ be the eigenvalues of $L_{g(t)}$.

Note  that 
\[\lambda_1(0)<\lambda_2(0)\leq \cdots\leq\lambda_{n_0}(0)<0,\]
\[\lambda_{n_0 +1}(0)=\cdots =\lambda_{n_0 + m_0 }(0)=0,\]
and 
\[\lambda_{n_0 +m_0 +1}(0)>0.\]

We claim that there is an interval $I$ centered at $t=0$  with the property that
\begin{align*}
\lambda_i(t)<0 &\ \mbox{if}\  1\leq i\leq n_0,\ \mbox{and}\ t\in I, \\
\lambda_i(t)\leq 0&\  \mbox{if}\ n_0+1\leq i\leq n_0+m_0 \ \mbox{and}\ t\in I.
\end{align*} 
Indeed,  by continuity of the eigenvalues, there exists $I$ centered at  $t=0$ such that for each $t\in I$, the  first $n_0$ eigenvalues of $L_{g(t)}$ are negative and $\lambda_{n_0 + m_0 +1}$ is positive.    
Assume that for some $1\leq l \leq m_0$, there is  $t_0\in I$ with $\lambda_{n_0+l}(t_0)>0$. Then  $g(t_0)$ has at most $n_0+l-1
< \Lambda_0 $ non-positive eigenvalues, which is a contradiction by the definition of $\Lambda_0 (M)$.

\medskip

Let $H$ be  any symmetric tensor of type $(0,2)$. Let $g(t)$ be an analytic curve such that $g(0)=g_0$ and $g'(0)=H$.  We 
denote by $\pi_{\V_0}$ the $L^2$-orthogonal  projection into $\V_0=\ker(L_{g_0})$. And we consider,
as in \cite{Gover_etal}, the linear operator $Q_{H}: \V_0\longrightarrow \V_0$ defined by 
\[Q_{H}(u):=\pi_{V_0}\circ (L_{g})'(0).\]

Recall that $m_0 = \dim ( \V_0 )$. It is known that there are analytic functions $\mu_1 , \dots , \mu_{m_0}$ defined
on an interval $(-\varepsilon , \varepsilon )$ such that for $t \in (-\varepsilon , \varepsilon )$, 
$\mu_1 (t) , \dots , \mu_{m_0} (t)$ are the $m_0$ eigenvalues of $L_{g(t)}$ which are close to 0
(see for instance the discussion in \cite{Gover_etal}). Let $f_i$ be a non-zero eigenfunction corresponding to $\mu_i$, 
which also depends analytically on $t$. By a direct computation
if follows that $\mu_i ' (0)$, $ i=1,...,m_0$, are eigenvalues of $Q_H$ (with corresponding eigenfunction $f_i$). 
The previous comments say that for any $H$ the funcions $\mu_i$ attained a local maximum at $0$ and therefore
$Q_H \equiv 0$.

On the other hand it is proved in (Proposition 4.1, \cite{Gover_etal}) that when the Yamabe constant of $[g_0 ]$ is negative
(which is our case since we are assuming that $\Lambda (M) \geq 1$) we have:
\begin{center}
{\it There exists  a $(0,2)$ symmetric tensor  $H$ such that $Q_{H}\neq 0$.}
\end{center} 
This is of course a contradiction and we have proved the proposition.  

\end{proof}

\begin{proof}[Proof of Theorem \ref{MainThm}] 	If $\Lambda(M)=0$  the theorem  follows from Proposition \ref{+M}. If $\Lambda(M)\geq 1$  it follows from Proposition \ref{prop n=1} that $\tilde{M}_k(M)\neq\emptyset$ if $k =\Lambda_0 (M)$.
Then the theorem follows by applying again Lemma \ref{nogap}.
\end{proof}

\begin{Lemma}\label{sizekernel} If $n_k\geq 2$ then 
	\begin{equation*}
	\dim(\ker(L_g))\geq n_k
	\end{equation*}
	for any $g\in M_{s_k}(M)$.
\end{Lemma}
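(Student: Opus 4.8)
The plan is to argue by contradiction: assuming the kernel of $L_g$ is too small, I will produce, by a first‑order perturbation of $g$, a metric whose number of negative eigenvalues lands strictly inside the gap $(s_k , s_{k+1})$, contradicting the defining property that $M_j$ is empty for every index strictly between two consecutive terms of $\{s_i\}$.

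First I would record that the hypothesis $n_k \geq 2$ forces $M_0 = \emptyset$. Indeed, if $M$ admitted a metric of non‑negative scalar curvature, then by Proposition \ref{+M} the set $M_j$ would be non‑empty for every $j \geq 0$, so every $n_i$ would equal $1$; thus $n_k\geq 2$ is impossible in that case. Consequently $\Lambda(M) \geq 1$, every metric in $M_{s_k}(M)$ has negative first eigenvalue and hence negative Yamabe constant, and in particular $s_k \geq 1$.

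Now fix $g \in M_{s_k}(M)$ and set $m = \dim(\ker(L_g))$, and suppose for contradiction that $m \leq n_k - 1$. Exactly as in the proof of Proposition \ref{prop n=1}, for a symmetric $(0,2)$‑tensor $H$ and an analytic curve $g(t)$ with $g(0)=g$ and $g'(0)=H$, the $m$ eigenvalue branches $\mu_1(t),\dots,\mu_m(t)$ issuing from $0$ are analytic, and $\mu_i'(0)$ are the eigenvalues of the self‑adjoint operator $Q_H$ on $\V_0=\ker(L_g)$. Since the Yamabe constant of $[g]$ is negative, Proposition 4.1 of \cite{Gover_etal} yields an $H$ with $Q_H \neq 0$; because $Q_{-H}=-Q_H$ and $Q_H$ is self‑adjoint, after replacing $H$ by $-H$ if necessary I may assume $Q_H$ has at least one negative eigenvalue. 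Let $a\geq 1$ be the number of negative eigenvalues of $Q_H$, so $1 \leq a \leq m$.

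Finally I would track the spectrum of $L_{g(t)}$ along $g(t)=g+tH$ for small $t>0$: the first $s_k$ eigenvalues stay negative and $\lambda_{s_k+m+1}$ stays positive by continuity, while the $a$ near‑zero branches with $\mu_i'(0)<0$ satisfy $\mu_i(t)=\mu_i'(0)t+O(t^2)<0$. Hence $s_k + a \leq Neg([g(t)]) \leq s_k + m$, and since $a\geq 1$ and $m\leq n_k-1$ this gives $s_k < Neg([g(t)]) < s_k + n_k = s_{k+1}$, so $M_{Neg([g(t)])}$ is non‑empty for an index strictly between $s_k$ and $s_{k+1}$ — a contradiction; therefore $m\geq n_k$. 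I expect the only delicate point to be the assertion that exactly $m$ branches remain near $0$ and split according to the signs of the eigenvalues of $Q_H$; this is precisely the analytic perturbation input already used in Proposition \ref{prop n=1}, so I would invoke it rather than reprove it. Note that no non‑degeneracy of $Q_H$ is needed, since $Neg$ counts only strictly negative eigenvalues and $g(t)$ itself witnesses the forbidden value.
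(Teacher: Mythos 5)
Your argument is correct in substance but follows a genuinely different route from the paper's. The paper avoids the $Q_H$ machinery entirely: it invokes Theorem \ref{zeroeigenvalue} to perturb $g$ to a nearby metric $h$ with $\ker(L_h)=0$ and the same number of eigenvalues near zero, observes that $s_k\leq Neg([h])\leq s_k+\dim(\ker(L_g))<s_{k+1}$ forces $Neg([h])=s_k$, hence $h\in\tilde{M}_{s_k}$, and then Lemma \ref{nogap} yields $s_{k+1}=s_k+1$, i.e. $n_k=1$, a contradiction. You instead import the analytic perturbation argument of Proposition \ref{prop n=1} (the operator $Q_H$ and Proposition 4.1 of \cite{Gover_etal}) to push at least one zero eigenvalue strictly below zero, landing $Neg([g(t)])$ in the forbidden gap between $s_k$ and $s_{k+1}$. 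Both reach the same contradiction with the defining properties of $s(M)$; the paper's version is shorter and recycles Lemma \ref{nogap}, while yours makes the mechanism (kernel elements being driven negative) explicit and correctly supplies the hypothesis $\Lambda(M)\geq 1$ (hence negative Yamabe constant) via Proposition \ref{+M} before invoking Proposition 4.1 of \cite{Gover_etal}.

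There is one uncovered case. Your contradiction hypothesis $m=\dim(\ker(L_g))\leq n_k-1$ allows $m=0$, but then $\V_0=0$, $Q_H$ is the zero operator on the zero space for every $H$, and the appeal to Proposition 4.1 of \cite{Gover_etal} (and the subsequent choice of a negative eigenvalue of $Q_H$, on which the whole lower bound $Neg([g(t)])\geq s_k+a$ with $a\geq 1$ rests) is vacuous. The case is easy to dispose of separately: $m=0$ means $g\in\tilde{M}_{s_k}$, so Lemma \ref{nogap} gives $M_{s_k+1}\neq\emptyset$, hence $s_{k+1}=s_k+1$ and $n_k=1$, contradicting $n_k\geq 2$. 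With that one-line patch your proof is complete.
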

\begin{proof}
	Let us assume the contrary. Let  $g\in M_{s_k}$ with  $\dim(\ker{L_g})< n_k$. Given $\varepsilon$, we obtain a metric $h$  as close to $g$ as we want, such that  the number of eigenvalues of  $L_h$ in the interval  $(-\varepsilon,\varepsilon)$  is $\dim(\ker{L_g})$ and $\ker{L_h}=\{0\}$.  It follows that 
	\[s_k \leq Neg(h)\leq s_k+\dim(\ker{L_g})< s_k+n_k=s_{k+1}.\] 
	Then, necessarily $Neg(h)=s_k$. Therefore $h\in \tilde{M}_{s_k}$ and  by Lemma \ref{nogap} we have that $s_{k+1}=s_k+1$ which is a  contradiction.
\end{proof}

\begin{Proposition} Let assume that there exists $g_0\in M_{s_k}$ such that $\dim(\ker( L_{g_0}))=n_k$, then $n_k=1$.
	\end{Proposition}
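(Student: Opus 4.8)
The plan is to argue by contradiction: suppose $n_k\geq 2$ while some $g_0\in M_{s_k}$ has $\dim(\ker(L_{g_0}))=n_k$. First I record the spectral picture at $g_0$: we have $\lambda_1(g_0)\leq\cdots\leq\lambda_{s_k}(g_0)<0$, then $\lambda_{s_k+1}(g_0)=\cdots=\lambda_{s_k+n_k}(g_0)=0$ (the kernel, of dimension exactly $n_k$), and $\lambda_{s_k+n_k+1}(g_0)>0$, where $s_k+n_k=s_{k+1}$. Since $n_k\geq 2$ the sequence $\{s_i\}$ omits an integer, so $M$ cannot carry a metric of non-negative scalar curvature: otherwise Proposition \ref{+M} would make every $M_j$ ($j\geq 0$) non-empty and force $n_i=1$ for all $i$. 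Hence $\Lambda(M)\geq 1$, so $\lambda_1(g_0)<0$ and the Yamabe constant $Y(M,[g_0])$ is negative; this is precisely what is needed to invoke (Proposition 4.1, \cite{Gover_etal}). Exactly as in the proof of Proposition \ref{prop n=1}, for a symmetric $(0,2)$-tensor $H$ I choose an analytic curve $g(t)$ with $g(0)=g_0$ and $g'(0)=H$, and I use the analytic perturbation of the $n_k$ eigenvalues near $0$, written $\mu_1(t),\dots,\mu_{n_k}(t)$, whose derivatives $\mu_i'(0)$ are exactly the eigenvalues of the self-adjoint operator $Q_H\colon\V_0\to\V_0$.

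By (Proposition 4.1, \cite{Gover_etal}) there is a tensor $H_0$ with $Q_{H_0}\neq 0$. Being self-adjoint and nonzero, $Q_{H_0}$ has a nonzero eigenvalue, and the argument splits according to its signature. Replacing $H_0$ by $-H_0$ if necessary (which turns $Q_{H_0}$ into $-Q_{H_0}$ by linearity of $H\mapsto Q_H$), in the semidefinite case I may assume $Q_{H_0}$ is positive semidefinite with $p\geq 1$ strictly positive eigenvalues; in the indefinite case I denote by $p\geq 1$ and $q\geq 1$ the numbers of positive and negative eigenvalues.

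The decisive point is that $Neg([g(t)])$ is an achievable value and therefore belongs to $\{s_i\}$. For $t$ small only the $\mu_i$ can change sign, so $Neg([g(t)])=s_k+\#\{i:\mu_i(t)<0\}$ lies in $[s_k,s_{k+1}]$; as there is no achievable value strictly between $s_k$ and $s_{k+1}$, the count $\#\{i:\mu_i(t)<0\}$ is forced to be $0$ or $n_k$. If $Q_{H_0}$ is indefinite, then for small $t>0$ the $q$ eigenvalues with $\mu_i'(0)<0$ are negative and the $p$ eigenvalues with $\mu_i'(0)>0$ are positive, so $1\leq q\leq\#\{i:\mu_i(t)<0\}\leq n_k-p\leq n_k-1$; this makes $Neg([g(t)])$ strictly between $s_k$ and $s_{k+1}$, a contradiction. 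If $Q_{H_0}$ is positive semidefinite with $p\geq 1$, then for small $t>0$ at least $p$ of the $\mu_i(t)$ are strictly positive, so $\#\{i:\mu_i(t)<0\}\leq n_k-p<n_k$; since this count is $0$ or $n_k$ it must be $0$, whence $Neg([g(t)])=s_k$ and $g(t)\in M_{s_k}$. But the same $p$ eigenvalues are nonzero, so $\dim(\ker(L_{g(t)}))\leq n_k-p<n_k$, contradicting Lemma \ref{sizekernel}. Either way we reach a contradiction, so $n_k=1$.

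I expect the real work to lie in the setup of the first paragraph rather than in the final dichotomy. One must install the analytic perturbation of the $n_k$ near-zero eigenvalues and the identification of $\mu_i'(0)$ with the eigenvalues of $Q_{H_0}$ exactly as in \cite{Gover_etal} --- including the harmless case in which some $\mu_i$ vanishes identically, which leaves $\#\{i:\mu_i(t)<0\}$ unchanged --- and one must justify $Y(M,[g_0])<0$ so that (Proposition 4.1, \cite{Gover_etal}) applies. Granting that framework, the two cases finish the proof by violating, respectively, the defining ``no achievable value between $s_k$ and $s_{k+1}$'' property of the sequence $\{s_i\}$ and the conclusion of Lemma \ref{sizekernel}.
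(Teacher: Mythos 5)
Your proof is correct and follows essentially the same route as the paper: reduce to $\Lambda(M)\geq 1$ and $n_k\geq 2$, use the fact that $Neg([g(t)])$ must equal $s_k$ or $s_{k+1}$ together with Lemma \ref{sizekernel} to control the $n_k$ near-zero eigenvalues along an analytic curve, and contradict Proposition 4.1 of \cite{Gover_etal} via the operator $Q_H$. The paper phrases the endgame by showing each $\mu_i$ has a local maximum at $t=0$ (hence $Q_H\equiv 0$ for every $H$), and your case analysis on the signature of $Q_{H_0}$ is just a more explicit unpacking of that same contradiction.
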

\begin{proof} By Proposition \ref{+M} we may assume that $\Lambda(M)\geq 1$ and $n_k\geq 2$.  Let $g(t)$ be an analytic curve such that $g(0)=g_0$. There is an interval $I$ centered at $0$  such that $g(t)$ has at least $s_k$ negative eigenvalues for any $t\in I$.  Applying Lemma \ref{sizekernel},  we have  that $\lambda_{i}(t)\leq 0$ if $s_k+1\leq i\leq s_k+n_k$ and $t\in I$.  However, using  a similar argument to the one used in the proof of Proposition \ref{prop n=1} we obtain a contradiction.
	\end{proof}

\section{$\Lambda-$invariant}\label{Lambda}

In this section, we will discuss upper bounds for the $\Lambda-$invariant. In \cite{Bar-Dahl}, B\"{a}r and Dahl introduced the $\kappa-invariant$ that measures how big is the almost kernel of the conformal Laplacian. Precisely, $\kappa(M)$ is the smallest non-negative integer such that for any  $m\in \N$ there is $g_m\in \mathcal{M}_M$ that satisfies that $|\lambda_i(g_m)|\leq 1$  for $1\leq i< \kappa(M)$ and   
$\lambda_{\kappa(M)}(g_m)>m$. Clearly, $\kappa(M)$ is greater or equal than $\Lambda_0 (M)
\geq \Lambda (M)$. Hence, the upper bounds obtained in \cite{Bar-Dahl} for   $\kappa(M)$ hold  for $\Lambda(M)$.

For non-connected manifolds, the $\Lambda-$invariant might be as big as we want. Let $M$  with $Y(M)<0$, then $\Lambda(M)\geq 1$.   Given a positive integer  $k$, let us consider  the disjoint union  $W=M\sqcup\dots \sqcup M$ of $k$ copies of $M$.   Since the spectrum of the conformal Laplacian of a disjoint union of Riemannian manifolds is the union of their  spectra, then \[\Lambda (W)=k\Lambda(M)\geq k.\]

\subsection{$\Lambda-$invariant and surgery.}

In this subsection we are going to discuss the behavior of the $\Lambda-$invariant under surgeries. 

Let \[z(M):=\inf\big\{\dim(\ker(L_g)):\ g\in M_{\Lambda(M)}\big\}.\]

\begin{Remark}
We have that $\Lambda (M) + z(M) \geq \Lambda_0 (M)$. If $\Lambda (M) + z(M) =  \Lambda_0 (M)$ then
$z(M)=0$, by Proposition 4.2.
\end{Remark}

\begin{Proposition}\label{surgeryLemma}
		Let $M$ be a closed manifold. If $N$ is obtained from $M$ by performing surgery of codimension at least $3$, then 
                    \[ \Lambda (N) \leq \Lambda_0 (N)\leq \Lambda_0 (M) \leq \Lambda (M) + z(M).\]
		\end{Proposition}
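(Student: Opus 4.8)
The plan is to establish the chain of inequalities
\[
\Lambda(N) \leq \Lambda_0(N) \leq \Lambda_0(M) \leq \Lambda(M) + z(M)
\]
by treating each inequality separately, since they are of quite different natures. The first inequality $\Lambda(N) \leq \Lambda_0(N)$ is immediate and was already noted in the introduction: every negative eigenvalue is in particular non-positive, so $\mathit{Neg}([g]) \leq \#\{k : \lambda_k(g) \leq 0\}$ for every metric, and taking minima over all metrics gives the claim. The last inequality $\Lambda_0(M) \leq \Lambda(M) + z(M)$ is essentially a definition-chase: by the definition of $z(M)$ as an infimum of kernel dimensions over $g \in M_{\Lambda(M)}$, I would choose (or take a near-optimal sequence approaching) a metric $g \in M_{\Lambda(M)}$ realizing $\dim(\ker(L_g)) = z(M)$. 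Such a metric has exactly $\Lambda(M)$ negative eigenvalues and $z(M)$ zero eigenvalues, hence exactly $\Lambda(M) + z(M)$ non-positive eigenvalues. Since $\Lambda_0(M)$ is the \emph{minimum} number of non-positive eigenvalues over all metrics, it is bounded above by this particular value.

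The heart of the proposition is the middle inequality $\Lambda_0(N) \leq \Lambda_0(M)$, and this is where I expect to spend the real effort. The strategy is to start from a metric $g$ on $M$ that realizes $\Lambda_0(M)$ non-positive eigenvalues and to transport its spectral data across the surgery using B\"ar--Dahl (Theorem \ref{surgerythm}). The subtlety is that Theorem \ref{surgerythm} only controls the first $k$ eigenvalues up to an error $\varepsilon$, and perturbing non-positive eigenvalues by $\varepsilon$ could in principle turn a zero or slightly negative eigenvalue into a positive one, which would \emph{decrease} the non-positive count rather than preserve it. To handle this cleanly, I would first replace $g$ by a nearby metric $g_*$ with trivial kernel: by Theorem \ref{zeroeigenvalue} the metrics with $0 \notin \mathrm{spec}(L_g)$ are dense, and by the $C^1$-continuity of the eigenvalue maps (as used in Section \ref{+Manifolds}) a sufficiently close $g_*$ still has exactly $\Lambda_0(M)$ non-positive eigenvalues; since its kernel is trivial, these are in fact all \emph{strictly} negative. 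Now choose $\varepsilon$ smaller than the absolute value of the largest (closest-to-zero) negative eigenvalue $\lambda_{\Lambda_0(M)}(g_*)$ and also smaller than the smallest positive eigenvalue $\lambda_{\Lambda_0(M)+1}(g_*)$. Applying Theorem \ref{surgerythm} with $k = \Lambda_0(M)+1$ produces a metric $h$ on $N$ with $|\lambda_i(h) - \lambda_i(g_*)| < \varepsilon$ for $1 \leq i \leq \Lambda_0(M)+1$; the choice of $\varepsilon$ forces the first $\Lambda_0(M)$ eigenvalues of $L_h$ to remain negative and the $(\Lambda_0(M)+1)$-th to remain positive. Hence $L_h$ has at most $\Lambda_0(M)$ non-positive eigenvalues, giving $\Lambda_0(N) \leq \Lambda_0(M)$.

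The main obstacle, as indicated, is the interaction between the finite-index error control of Theorem \ref{surgerythm} and the \emph{non-strict} inequality defining $\Lambda_0$. The zero eigenvalues hidden in $\Lambda_0(M)$ are the dangerous case: without first eliminating them via Theorem \ref{zeroeigenvalue}, an $\varepsilon$-perturbation of a zero eigenvalue could land on either side of $0$, and one cannot guarantee the non-positive count is preserved. The reduction to a kernel-free metric $g_*$ with strictly negative eigenvalues is therefore the key maneuver that makes the spectral gap argument work, since strict negativity provides the quantitative buffer needed to absorb the B\"ar--Dahl error. One should also verify that when $\Lambda_0(M) = 0$ the statement is trivial (an empty non-positive count is preserved, as $N$ then admits a positive scalar curvature metric by the surgery theorem for positive scalar curvature implicit in Theorem \ref{surgerythm}), so the argument above is only needed for $\Lambda_0(M) \geq 1$, where the closest-to-zero negative eigenvalue exists and the gap construction is meaningful.
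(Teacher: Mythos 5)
Your proposal is correct, and for the two outer inequalities it does essentially what the paper does (the paper in fact only writes out the middle one). For the key inequality $\Lambda_0(N)\leq\Lambda_0(M)$, however, you take a genuinely different route. The paper obtains the needed spectral gap by citing Proposition \ref{prop n=1}: under the hypothesis $\Lambda(M)\geq 1$, any metric $g$ realizing $k=\Lambda_0(M)$ automatically has trivial kernel, so $\lambda_k(g)<0<\lambda_{k+1}(g)$ and Theorem \ref{surgerythm} applies directly; that proposition in turn rests on analytic perturbation of eigenvalues and on the nonvanishing of the operator $Q_H$ from \cite{Gover_etal} when the Yamabe constant is negative. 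You instead bypass this entirely: you perturb a realizing metric to a kernel-free one via the density result (Theorem \ref{zeroeigenvalue}) and then run the Bär--Dahl approximation against the resulting strict gap. Your route is more elementary, and it has the added virtue of covering the case $\Lambda(M)=0<\Lambda_0(M)$ uniformly (e.g.\ $T^n$, where the flat metric realizes $\Lambda_0=1$ with a genuine zero eigenvalue, so the hypothesis of Proposition \ref{prop n=1} fails and the paper's one-line citation does not literally apply to every realizing metric). One point you should make explicit: the assertion that the nearby kernel-free metric $g_*$ ``still has exactly $\Lambda_0(M)$ non-positive eigenvalues'' does not follow from $C^1$-continuity alone --- continuity only bounds the count above by $\Lambda_0(M)$, since a zero eigenvalue could a priori move to the positive side; it is the minimality in the definition of $\Lambda_0(M)$ that forces the count not to drop, hence forces all the perturbed near-zero eigenvalues to become strictly negative. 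That is a one-line addition, not a gap in the idea.
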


\begin{proof} We have to prove that $ \Lambda_0 (N)\leq \Lambda_0 (M)$ Let $g$ be a metric on $M$ 
that attains $k=\Lambda_0 (M)$. Then  $\lambda_k (g) <0$ and $\lambda_{k+1}(g)>0$
(by Proposition 4.2). By Theorem \ref{surgerythm}, given $\varepsilon>0$ there is $h_{\varepsilon}\in \mathcal{M}_N$ such that $|\lambda_i(h_{\varepsilon})-\lambda_i(g)|<\varepsilon$ for any $i\leq k+1$. Therefore, for $\varepsilon$ small enough, we have that $L_{h_{\varepsilon}}$ has exactly $k$ non-positive (which are actually negative) eigenvalues.
	\end{proof}

\begin{Corollary} Let $M$ be a closed connected manifold. Assume that $N$ is obtained from $M$ by performing surgery of codimension at least $3$. If $\Lambda (M) = \Lambda_0 (M)$ then $\Lambda(N)\leq \Lambda(M) $. If $\Lambda (M) =0$ then
$\Lambda(N) =0$ or $\Lambda (N) =1$.
	\end{Corollary}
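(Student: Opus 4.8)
The plan is to derive both statements directly from Proposition \ref{surgeryLemma}, which already packages all of the surgery-analytic content (via Theorem \ref{surgerythm}); what remains is essentially bookkeeping about $\Lambda$, $\Lambda_0$ and $z$.

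For the first assertion I would simply read off the chain of inequalities recorded in Proposition \ref{surgeryLemma}. Its first two inequalities give $\Lambda(N)\leq \Lambda_0(N)\leq \Lambda_0(M)$, and the hypothesis $\Lambda(M)=\Lambda_0(M)$ lets me replace the right-hand side by $\Lambda(M)$. This yields $\Lambda(N)\leq \Lambda(M)$ with no further work.

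For the second assertion the key point I would establish is the bound $z(M)\leq 1$ whenever $\Lambda(M)=0$. By the equivalence of conditions i)--iii) established at the start of Section \ref{+Manifolds}, $\Lambda(M)=0$ forces $M$ to carry a scalar flat metric $g_0$. Since $s_{g_0}=0$, the conformal Laplacian reduces to $L_{g_0}=\frac{4(n-1)}{n-2}\Delta_{g_0}$, so its spectrum is nonnegative and its kernel coincides with $\ker(\Delta_{g_0})$. Here connectedness of $M$ enters decisively: on a closed connected manifold the harmonic functions are exactly the constants, whence $\dim(\ker(L_{g_0}))=1$. Moreover $g_0$ has no negative eigenvalue, so $Neg([g_0])=0=\Lambda(M)$ and therefore $g_0\in M_{\Lambda(M)}$. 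Taking the infimum defining $z(M)$ over this class gives $z(M)\leq \dim(\ker(L_{g_0}))=1$.

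Combining this with Proposition \ref{surgeryLemma} then finishes the argument: $\Lambda(N)\leq \Lambda_0(N)\leq \Lambda_0(M)\leq \Lambda(M)+z(M)\leq 0+1=1$. Since $\Lambda(N)$ is a nonnegative integer, I conclude $\Lambda(N)\in\{0,1\}$. I do not expect a genuine obstacle here; the only subtle point worth flagging is the use of connectedness to pin the kernel dimension of a scalar flat metric at exactly $1$ (for a disconnected $M$ one could have $z(M)>1$ and the dichotomy could fail), together with the fact that all the hard eigenvalue-perturbation estimates have already been absorbed into Proposition \ref{surgeryLemma}.
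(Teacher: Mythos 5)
Your proof is correct, and for the second assertion it takes a genuinely different route from the paper. The first assertion is handled exactly as in the paper: both of you read $\Lambda(N)\leq\Lambda_0(N)\leq\Lambda_0(M)=\Lambda(M)$ off Proposition \ref{surgeryLemma}. For the second assertion the paper goes back to the proof of Proposition \ref{+M}: starting from a scalar flat metric it perturbs along a direction in which the Yamabe constant becomes negative to produce a metric in $\tilde{M}_1(M)$, and then applies Theorem \ref{surgerythm} directly to get $\tilde{N}_1\neq\emptyset$, hence $\Lambda(N)\leq 1$. You instead observe that the scalar flat metric $g_0$ itself witnesses $z(M)\leq 1$, since $L_{g_0}$ is a positive multiple of $\Delta_{g_0}$ and its kernel on a closed \emph{connected} manifold consists of the constants, and you then feed this into the full chain $\Lambda(N)\leq\Lambda_0(N)\leq\Lambda_0(M)\leq\Lambda(M)+z(M)$ of Proposition \ref{surgeryLemma}. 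Your version avoids re-running the second-variation argument of Proposition \ref{+M} and isolates precisely where connectedness enters (pinning $\dim\ker L_{g_0}=1$), which is a nice clarification. The only caveat worth recording is that you are invoking the middle inequality $\Lambda_0(N)\leq\Lambda_0(M)$ of Proposition \ref{surgeryLemma} in the regime $\Lambda(M)=0$, whereas the paper's proof of that inequality cites Proposition \ref{prop n=1}, which is stated under the hypothesis $\Lambda(M)\geq 1$; the argument still goes through (only $\lambda_{k+1}(g)>0$ is needed to conclude that the surgered metric has at most $k$ non-positive eigenvalues, and the scalar flat metric provides this with $k=1$), but since you use the proposition as a black box you inherit this small point of bookkeeping. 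It is not a gap in your argument.
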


\begin{proof} The first statement follows directly from the previous proposition. If $\Lambda (M)=0$ then $M$ admits a
scalar flat metric. If $M$ admits a positive scalar curvatre metric then it is well known that $N$ also admits a positive scalar
curvature metric and $\Lambda (N)=0$. Even if $M$ does not admit a metric of positive scalar curvature it is shown in the
proof of Proposition 3.1 that $\tilde{M}_1 \neq \emptyset$. Then it follows as before from Theorem \ref{surgerythm} then
$\tilde{N}_1 \neq \emptyset$ and therefore $\Lambda (N) \leq 1$.
	\end{proof}

 A $k-$dimensional  ($k\geq 1$) surgery  on a $n-$dimensional manifold can be undone by performing a $n-k-1-$dimensional surgery.  If we want to  reverse  a $k-$dimensional surgery by doing  surgery of codimension at least $3$,  then $ n-k-1\leq n-3$.  Therefore, any $k-$dimensional surgery with $2\leq k \leq n-3$  can be undo by a surgery of codimension at least 3. Is $M$ is a connected manifold, a $0-$dimensional surgery can be reverted by  a 1-dimensional surgery (see for instance \cite{Petean2}). 
The following result then follows from Proposition 5.2:
 
\begin{Proposition}\label{EqualityLambda} Let $N$ be a manifold obtained from a connected $n-$dimensional  closed manifold $M$ by performing a $k-$dimensional surgery. Assume that   
$k\leq n-3$,  $k\neq 1$ and $n\geq 4$. Then 
\begin{equation}\label{equalityL}\Lambda_0 (M)=\Lambda_0 (N).
\end{equation}
	\end{Proposition}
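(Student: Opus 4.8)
The plan is to deduce Proposition \ref{EqualityLambda} as an immediate symmetry argument built on Proposition \ref{surgeryLemma}, exploiting the reversibility of surgeries in the stated codimension range. First I would establish, from the paragraph preceding the statement, that a $k$-dimensional surgery with $2\leq k\leq n-3$ has a well-defined inverse surgery which also has codimension at least $3$: indeed, undoing a $k$-surgery on an $n$-manifold is an $(n-k-1)$-dimensional surgery, and the codimension-$\geq 3$ requirement on the inverse reads $n-(n-k-1)\geq 3$, i.e.\ $k\leq n-3$ (whereas the codimension-$\geq 3$ requirement on the original surgery reads $n-k\geq 3$, i.e.\ $k\leq n-3$ as well, and together with $k\neq 1$ and $n\geq 4$ both surgeries have codimension at least $3$). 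This is the structural observation that makes the argument symmetric in $M$ and $N$.

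The key steps, in order, are as follows. Since $N$ is obtained from $M$ by a surgery of codimension at least $3$, Proposition \ref{surgeryLemma} applies directly and yields
\[
\Lambda_0(N)\leq \Lambda_0(M).
\]
Next I would observe that, under the hypotheses $2\leq k\leq n-3$ and $k\neq 1$, the surgery is reversible by a surgery of codimension at least $3$, so $M$ is in turn obtained from $N$ by a surgery of codimension at least $3$. Applying Proposition \ref{surgeryLemma} with the roles of $M$ and $N$ interchanged then gives the reverse inequality
\[
\Lambda_0(M)\leq \Lambda_0(N).
\]
Combining the two inequalities yields $\Lambda_0(M)=\Lambda_0(N)$, which is exactly \eqref{equalityL}.

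The main obstacle is not analytic but topological: I must be completely certain that the inverse of a $k$-surgery in this range genuinely is realizable as a surgery of codimension at least $3$ (so that Proposition \ref{surgeryLemma} is legitimately applicable in both directions), and that the excluded case $k=1$ together with the dimension constraint $n\geq 4$ is precisely what guarantees this. The subtlety lies in the boundary cases of the codimension arithmetic — for $k=1$ the inverse would be an $(n-2)$-dimensional surgery, which has codimension $2$ and hence falls outside the hypotheses of Theorem \ref{surgerythm} and Proposition \ref{surgeryLemma}, explaining why that value must be forbidden. I would verify that for every admissible $k$ both the forward surgery ($n-k\geq 3$) and the backward surgery ($k+1\geq 3$, i.e.\ $k\geq 2$) meet the codimension-three threshold, so no degenerate low-codimension case sneaks in. Once this bookkeeping is pinned down, the proposition is a two-line consequence of the already-established surgery monotonicity of $\Lambda_0$.
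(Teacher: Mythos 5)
Your overall strategy is exactly the paper's: apply the monotonicity $\Lambda_0(N)\leq\Lambda_0(M)$ from Proposition \ref{surgeryLemma} in both directions, using the fact that the surgery can be undone by another surgery of codimension at least $3$. For $2\leq k\leq n-3$ your codimension bookkeeping is correct: the forward surgery has codimension $n-k\geq 3$ and the inverse $(n-k-1)$-dimensional surgery has codimension $k+1\geq 3$.

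However, there is a genuine gap: the hypotheses of the proposition ($k\leq n-3$, $k\neq 1$, $n\geq 4$) also admit $k=0$, and your claim that ``for every admissible $k$ \dots the backward surgery ($k+1\geq 3$, i.e.\ $k\geq 2$)'' meets the codimension-three threshold is false in that case — the generic inverse of a $0$-dimensional surgery would be an $(n-1)$-dimensional surgery, of codimension $1$, so Proposition \ref{surgeryLemma} cannot be applied to it. The paper handles this by invoking the separate topological fact (stated in the paragraph preceding the proposition, with reference to \cite{Petean2}) that on a \emph{connected} manifold a $0$-dimensional surgery can be reverted by a $1$-dimensional surgery, which has codimension $n-1\geq 3$ precisely when $n\geq 4$; this is also the reason the connectedness hypothesis on $M$ appears in the statement. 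Your argument as written silently excludes $k=0$ and therefore does not prove the proposition in full; adding this one observation closes the gap.
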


\subsection{Upper bounds}
 For  a positive integer $k$ we denote with $M^k$ the product of $k$ copies of $M$.  We write with $M^0$  the one point manifold;  with  $M+N$ we denote indistinctly  the connected sum or the disjoint union of $M$ and $N$; with $kM$ we denote the disjoint union or the connected sum of $k$ copies of $M$ ; and with  $0.M$  the $n-$dimensional sphere. 
 
 \subsubsection{Spin cobordism groups.} Here,  we recall some facts on  the spin cobordism groups and the $\alpha-$genus. We refer the reader to \cite{Lawson-Michelson} and the references therein for a detailed treatment of these subjects. Let $M$ and $N$ be  two closed spin $n-$dimensional manifolds.  We say that $M$ and $N$ are spin cobordant, $M\sim N$,  if  there exists a spin  $n+1-$dimensional manifold $W$  with boundary   $M\sqcup -N$. The {\it n-spin cobordism group}, $\Omega^{Spin}_n$, is the quotient of the  space of spin $n-$dimensional manifolds by the latter equivalence relation. 
The $\alpha$-genus (see \cite{Milnor})  is a suryective homomorphism  between $\Omega^{Spin}_n$ and the real $K-$homology group of a point $K_nO(pt)$. Indeed, two spin manifolds  are cobordant if and only if they have the same $\alpha-$genus. The $\alpha$-genus gives an obstruction to the existence of positive scalar curvature metrics. By the works of  Lichnerowicz \cite{Lich} and Hitchin \cite{H}  we know that  for a manifold with positive Yamabe invariant the    $\alpha-$genus must vanish. Moreover, it was shown by Stolz in \cite{Stolz} that if $M$ is simply connected of dimension at least 5 and $\alpha(M)=0$ then $Y(M)>0$.    The group $K_nO(pt)$ is  trivial  for  $n\equiv 3,5,6,7$ mod. $8$;  $K_nO(pt)=\Z_2$ when $n\equiv 1,2$ mod. $8$; and $K_nO(pt)=\Z$ if $n\equiv 0,4$ mod. $8$. Therefore,  $Y(M)>0$  if $M$ is a simply connected spin manifold of dimension $n\equiv 3,5,6,7$ mod. $8$ of dimension at least $5$.  In \cite{Petean}, it is  showed that for any simply connected manifolds of dimension at least $5$, the Yamabe invariant is not negative. In order to stablish Proposition \ref{upperboundP} we use a similar argument to the one used in \cite{Petean}.

 The Joyce manifold, $J_8$,  is a  simply connected closed spin  $8-$dimensional Riemannian manifold with holonomy group  $Spin(7)$ (see \cite{Joyce}). $J_8$ is Ricci flat and satisfies $\alpha(J_8)=1$. Therefore,   $\alpha(J_8^{k})$ generates $K_{8k}O(pt)$.  The Kummer surface, $K3$, is the hyperplane in $\C P^3$ defined  by $K3:=\{[z_0:z_1:z_2:z_3]\in \C P^3:\  z_0^2+z_1^2+z_2^2+z_3^2=0\}$. $K3$ is a closed simply connected spin manifold of dimension $4$, that carries a Ricci flat metric but does not admit a positive scalar curvature metric. Its $\alpha-$genus is 1. Hence, $J_8^{k}\times lK3$ is a closed spin $8k+4-$dimensional manifold that admits a scalar flat metric with $\alpha(J_8^{k}\times lK3)=l\in K_{8k+4}O(pt)$. On the other hand, 
$\alpha(J_8^k\times S^1)=1$  and $\alpha(J_8^k\times S^1\times S^1)=1$ generate $K_{8k+1}O(pt)$ and $K_{8k+2}O(pt)$, respectively. 
 
 From the comments above we have that for any  $n$ and $s\in K_nO(pt)$ there exists a closed spin manifold $N_s$ that admits a scalar flat metric and $\alpha([N_s])=s$. Note that if $n\equiv 1,2,3,5,6,7$ mod. $8$ or $n\equiv 0,4$ mod. 8 and $s=0,1$, $N_s$  can be chosen connected. 
But if  $s\geq 2$,   $N_s$ is not be connected.   More precisely,  we have that $N_s=s.N$ where $N$ is a connected spin manifold that carries a scalar flat metric but does not admit a positive scalar metric. Note that  $z(N_s)=s$.

\begin{Lemma}\label{lemma2} Each class of $\Omega^{Spin}_n$ contains a  manifold with vanishing $\Lambda-$invariant.
	\end{Lemma}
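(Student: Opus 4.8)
The plan is to reduce the statement to realizing each spin cobordism class by a (possibly disconnected) closed manifold carrying a metric of non-negative scalar curvature, since by the equivalence of $i)$, $ii)$ and $iii)$ recorded in Section \ref{+Manifolds} this is exactly the condition $\Lambda=0$. Fix a class $c\in\Omega^{Spin}_n$ and put $s=\alpha(c)\in K_nO(pt)$. The construction preceding the lemma already furnishes a closed spin manifold $N_s$ admitting a scalar flat metric with $\alpha([N_s])=s$, so $\Lambda(N_s)=0$. The only gap between $N_s$ and $c$ is that the $\alpha$-genus does not determine the spin cobordism class in general: what we control is $\alpha(c-[N_s])=s-s=0$, i.e. $c-[N_s]\in\ker\alpha$, and this kernel can be non-trivial (it first appears in dimension $8$, where $\Omega^{Spin}_8\cong\Z\oplus\Z$ surjects onto $K_8O(pt)=\Z$).

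First I would dispose of the low-dimensional cases $n=3,4$, where $\alpha$ is injective: indeed $\Omega^{Spin}_3=0$, while $\Omega^{Spin}_4\cong\Z$ is generated by $K3$ and is detected by $\alpha$ (with $\alpha(K3)=1$ a generator of $K_4O(pt)=\Z$). Hence $N_{\alpha(c)}$ and $c$ have the same $\alpha$-genus, so $[N_{\alpha(c)}]=c$, and $N_{\alpha(c)}$ is a representative with $\Lambda=0$, settling these dimensions at once.

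For $n\geq 5$ I would handle $\ker\alpha$ through positive scalar curvature. By Lichnerowicz \cite{Lich} and Hitchin \cite{H} any closed spin manifold carrying a positive scalar curvature metric has vanishing $\alpha$-genus, and since $\alpha$ is a cobordism invariant such a manifold represents a class in $\ker\alpha$; conversely, Stolz's theorem \cite{Stolz} guarantees that a simply connected spin manifold of dimension $\geq 5$ with $\alpha=0$ admits a positive scalar curvature metric. Therefore the classes containing a positive scalar curvature manifold form precisely the subgroup $\ker\alpha$. Applying this to $c-[N_s]\in\ker\alpha$ produces a closed manifold $P$ admitting positive scalar curvature with $[P]=c-[N_s]$, whence $[N_s\sqcup P]=[N_s]+[P]=c$. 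Equipping the disjoint union with the scalar flat metric on $N_s$ and the positive scalar curvature metric on $P$, and using that the spectrum of $L_{N_s\sqcup P}$ is the union of the two spectra, we get $\Lambda(N_s\sqcup P)=\Lambda(N_s)+\Lambda(P)=0$. This is the required representative of $c$.

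The main obstacle is exactly the passage through $\ker\alpha$, and two points demand care. First, to invoke Stolz's theorem I must replace an arbitrary representative of $c-[N_s]$ by a simply connected one; this is achieved by surgery on embedded circles generating $\pi_1$, which for $n\geq 5$ can be carried out in the spin category and leaves the spin cobordism class unchanged, being the trace of the surgery. Second, one must check the converse inclusion, namely that positive scalar curvature classes fill the whole kernel rather than a proper subset, which is precisely where the nontrivial direction of \cite{Stolz} enters. Once these are in place the argument closes, and the use of disjoint unions makes transparent why disconnected representatives are forced when $s\geq 2$, in accordance with the identity $z(N_s)=s$.
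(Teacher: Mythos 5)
Your proof is correct and follows essentially the same route as the paper's: decompose the given class as $[N_{s}]$ plus a class in $\ker\alpha$, realize the latter by a positive scalar curvature manifold via Stolz (after passing to a simply connected representative), and take the disjoint union/sum to get a representative with non-negative scalar curvature, hence $\Lambda=0$. Your version is slightly more careful in spelling out the low-dimensional cases $n=3,4$ and the reduction to a simply connected representative, but the key decomposition and the use of Stolz's theorem coincide with the paper's argument.
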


\begin{proof} If $n\equiv 3,5,6,7$ mod. $8$, any spin manifold is cobordant to $S^n$.     Hence, let us  assume that $n\equiv 0,1,2,4$ mod. $8$.  Let $M$ be a closed spin manifold and  let $N_{\alpha(M)}$ as in the paragraph above. Then, $\alpha(M-N_{\alpha(M)})=\alpha(M)-\alpha(N_{\alpha(M)})=0$.  Therefore,   there exists a simply connected manifold $S$ with positive Yamabe invariant that is cobordant with $M-N_{\alpha(M)}$ (see \cite{Stolz}). This implies that
	 $M\sim S+N_{\alpha(M)}$. Since $N$ admits a scalar flat metric,  $S+N_{\alpha(M)}$ carries a metric $g$  with $\lambda_1(g)\geq 0$ and this completes the proof.
\end{proof}

\subsubsection{Upper bound}

Let  $D(M)$ be $\alpha(M)$  if $M$ is a closed spin manifold  and $0$ if $M$ is not spin. The following Proposition was obtained for the $\kappa-$invariant by B\"{a}r and Dahl in \cite{Bar-Dahl}.

\begin{Proposition}\label{upperboundP} If $M$ is a closed simply connected manifold of dimension $n\geq 5$  then 
	\[\Lambda(M) \leq \Lambda_0 (M) \leq |D(M)|.\]
	\end{Proposition}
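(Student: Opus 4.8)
The plan is to prove the bound $\Lambda_0(M) \le |D(M)|$ by reducing an arbitrary simply connected spin manifold of dimension $n \ge 5$ to a standard model via surgery, and then exhibiting an explicit metric on that model whose conformal Laplacian has at most $|D(M)|$ non-positive eigenvalues. The non-spin case is immediate, since then $D(M) = 0$ and one must only recall that a non-spin simply connected manifold of dimension at least $5$ admits a positive scalar curvature metric (by Stolz's theorem, as $\alpha$ is irrelevant), giving $\Lambda_0(M) = 0 = |D(M)|$.

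\medskip

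For the spin case, first I would invoke the cobordism analysis from the preceding subsection. By Lemma \ref{lemma2} and the discussion around the manifolds $N_s$, the class $[M] \in \Omega^{Spin}_n$ contains a representative $S + N_{\alpha(M)}$, where $S$ is simply connected with $Y(S) > 0$ and $N_{\alpha(M)} = s.N$ with $s = |\alpha(M)| = |D(M)|$ and $N$ a connected spin manifold carrying a scalar flat metric. The key structural fact is that $z(N_s) = s$, and more precisely the disjoint union $S \sqcup N_{\alpha(M)}$ carries a metric whose conformal Laplacian has exactly $s$ non-positive eigenvalues: the positive scalar curvature factor $S$ contributes none, while each of the $s$ copies of $N$ contributes its single non-positive (in fact zero, after choosing the scalar flat metric) bottom eigenvalue, so the spectrum of the disjoint union has exactly $s$ non-positive eigenvalues. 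Thus $\Lambda_0(S \sqcup N_{\alpha(M)}) \le s = |D(M)|$.

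\medskip

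The remaining step is to transport this bound from the cobordant model to $M$ itself using surgery invariance. Since $M$ and $S + N_{\alpha(M)}$ are spin cobordant and both simply connected of dimension $n \ge 5$, by the Gromov--Lawson--Stolz surgery theory they are related by a sequence of surgeries of codimension at least $3$ (one passes through the cobordism, which can be arranged to add only handles of the appropriate index when the manifolds are simply connected and $n \ge 5$). Proposition \ref{surgeryLemma} then gives $\Lambda_0(N') \le \Lambda_0(M')$ whenever $N'$ is obtained from $M'$ by a single codimension $\ge 3$ surgery; applying this along the chain connecting the model to $M$ yields $\Lambda_0(M) \le \Lambda_0(S + N_{\alpha(M)}) \le |D(M)|$. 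Here I would need to take the connected-sum interpretation of $+$ to land inside the simply connected codimension $\ge 3$ surgery framework, and I would appeal to the fact that connected sum is a $0$-dimensional surgery whose reverse is handled as in the earlier discussion.

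\medskip

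The main obstacle I anticipate is the surgery-theoretic bookkeeping: one must ensure that the whole cobordism from $M$ to its standard representative can be realized by surgeries of codimension at least $3$ so that Proposition \ref{surgeryLemma} applies at every step without the inequality pointing the wrong way. Simple connectivity and $n \ge 5$ are exactly what make this possible, via handle cancellation and the $h$-cobordism philosophy underlying Stolz's argument, but verifying that no low-codimension (in particular codimension $1$ or $2$) surgery is forced is the delicate point. The spectral computation on the disjoint-union model, by contrast, is routine once the scalar flat metrics on the $N$ factors are fixed.
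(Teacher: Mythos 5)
Your proposal is correct and follows essentially the same route as the paper: reduce to the cobordism representative $S + N_{\alpha(M)}$ from Lemma \ref{lemma2}, realize $M$ from it by codimension $\geq 3$ surgeries (Gromov--Lawson, using simple connectivity and $n\geq 5$), and apply Proposition \ref{surgeryLemma}; your direct count of the $s$ zero eigenvalues on the disjoint-union model is just an unpacking of the paper's $\Lambda(N)+z(N_{\alpha(M)})=\alpha(M)$. The only slip is attribution: the positive scalar curvature metric in the non-spin simply connected case is due to Gromov--Lawson, not Stolz (Stolz covers the spin case with vanishing $\alpha$-genus).
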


\begin{proof}
	
	Gromov and Lawson proved in \cite{Gromov-Lawson} that any non-spin simply connected manifold of dimension at least $5$ admits a positive scalar curvature metric. Therefore, the  $\Lambda-$invariant vanishes for these manifolds. Hence, let us assume that $M$ is a closed simply connected spin manifold of dimension at least 5.  If  $\alpha(M)=0$, then $M$ admits a metric of positive scalar curvature and $\Lambda(M)=0$. This always happens when $n\equiv 3,5,6,7$ mod. 8.  For this reason suppose  that $n\equiv 0,1,2,4$ mod. $8$ and $\alpha(M)\geq 1$. By Lemma \ref{lemma2}, $M$ is spin cobordant  with  a manifold  $N$  such that $\Lambda(N)=0$. From the  proof of lemma  $N= S+N_{\alpha(M)}$, where $Y(S)>0$, $N_{\alpha(M)}$ admits a scalar flat metric and satisfies that  $z(N_{\alpha(M)})=\alpha(M)$. Hence, $z(S+N_{\alpha(M)})=\alpha(M)$.  Since $M$ is simply connected, $M$ can be obtained from any manifold that belongs to its cobordant class  by performing surgery of codimension at least 3 (see \cite{Gromov-Lawson}).  Applying  Proposition \ref{surgeryLemma} we have that \[\Lambda(M)\leq \Lambda(N)+z(N_{\alpha(M)})=\alpha(M),\]
which  finishes the proof.
\end{proof}

As an inmediate consequence  we obtain the following corollary.

\begin{Corollary} Let $M$  be a  closed simply connected manifold of dimension at least 5. If either $M$ is not spin or $M$ is spin and $n\equiv 1,2,3,5,6,7$ mod. 8 or $n\equiv 0,4$ mod. 8 with $|\alpha(M)|\leq 1$, then $\Lambda_0 (M)\leq 1$.
	\end{Corollary}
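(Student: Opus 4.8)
The plan is to deduce everything directly from Proposition \ref{upperboundP}, which already supplies the inequality $\Lambda_0 (M) \leq |D(M)|$ for every closed simply connected $M$ with $n \geq 5$. It therefore suffices to check that, under each of the stated hypotheses, the right-hand side satisfies $|D(M)| \leq 1$. The verification is a short case analysis matching the disjunction in the statement.

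First I would dispose of the non-spin case: here $D(M) = 0$ by the very definition of $D$, so Proposition \ref{upperboundP} gives $\Lambda_0 (M) = 0 \leq 1$. For the spin cases I would invoke the computation of the real $K$-homology $K_nO(pt)$ recalled just before Lemma \ref{lemma2}. When $n \equiv 3,5,6,7$ mod. $8$ the group $K_nO(pt)$ is trivial, whence $\alpha(M) = 0$ and again $|D(M)| = 0$. When $n \equiv 1,2$ mod. $8$ we have $K_nO(pt) = \Z_2$, whose unique nonzero element corresponds to $|\alpha(M)| = 1$; thus $|D(M)| \leq 1$ with no extra assumption needed. Finally, when $n \equiv 0,4$ mod. $8$ the group is $\Z$, so $\alpha$ could a priori be arbitrarily large — this is exactly why the statement imposes $|\alpha(M)| \leq 1$ in this range — and granting that hypothesis yields $|D(M)| = |\alpha(M)| \leq 1$.

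Combining the cases, Proposition \ref{upperboundP} gives $\Lambda_0 (M) \leq |D(M)| \leq 1$ throughout, which is the assertion. I do not expect any genuine obstacle here: the corollary is essentially bookkeeping built on the bound $\Lambda_0 (M) \leq |D(M)|$ together with the structure of $K_nO(pt)$. The only point worth emphasizing in the write-up is the dichotomy between the congruence classes $n \equiv 1,2,3,5,6,7$ mod. $8$, for which $K_nO(pt)$ automatically forces $|\alpha(M)| \leq 1$, and the classes $n \equiv 0,4$ mod. $8$, where $K_nO(pt) = \Z$ and the bound on $\alpha$ must be assumed rather than derived.
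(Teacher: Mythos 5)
Your proposal is correct and coincides with the paper's argument: the corollary is stated there as an immediate consequence of Proposition \ref{upperboundP}, and the only content is precisely the case check that $|D(M)|\leq 1$ under each hypothesis, using the structure of $K_nO(pt)$ exactly as you do. Nothing further is needed.
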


We suspect that for any closed simply connected manifold of dimension at least $5$ the  $\Lambda_0-$invariant  is not greater than 1.

\end{document}